\newcommand\R{{\mathbb{R}}}
\newcommand\C{{\mathbb{C}}}
\newcommand\tr{\operatorname{tr}}
\newcommand\Mat{\operatorname{Mat}}
\newcommand\var{{{\mathbf{V}}}}
\newcommand\muM{{\mu_{\mathcal{M}}}}
\newcommand\muG{{\mu_{\mathcal{G}}}}
\newcommand\muV{{\mu_{\mathcal{V}}}}
\newcommand\supp{\mathtt{S}}
\def\M#1{{\mathcal{M}}_{d,#1}}
\def\G#1{{\mathcal{G}}_{d,#1}}
\def\V#1{{\mathcal{V}}_{d,#1}}
\theoremstyle{plain}
  \newtheorem{theorem}{Theorem}[section]
  \newtheorem{proposition}[theorem]{Proposition}
  \newtheorem{lemma}[theorem]{Lemma}
\theoremstyle{definition}
  \newtheorem*{remark}{Remark}
\title[A sharp $k$-plane Strichartz inequality]{A sharp $k$-plane Strichartz inequality for the Schr\"odinger equation }
\author{J. Bennett, N. Bez, T. C. Flock, S. Guti\'errez and M. Iliopoulou}
\thanks{This work was supported by the European Research Council [grant
number 307617] (Bennett, Flock, Iliopoulou) and JSPS Grant-in-Aid for Young Scientists (A) [grant number 16H05995] (Bez)}
\address{Jonathan Bennett, Taryn C. Flock, and Susana Guti\'errez: School of Mathematics, The Watson Building, University of Birmingham, Edgbaston,
Birmingham, B15 2TT, England.}
\email{J.Bennett@bham.ac.uk, T.C.Flock@bham.ac.uk, s.gutierrez@bham.ac.uk }
\address{Neal Bez: Department of Mathematics, Graduate School of Science
and Engineering, Saitama University, Saitama 338-8570, Japan}
\email{nealbez@mail.saitama-u.ac.jp}
\address{Marina Iliopoulou:  Department of Mathematics, University of California, Berkeley, CA 94720-3840, USA
}
\email{m.iliopoulou@berkeley.edu}
\begin{document}

\begin{abstract}
We prove that 
$$
\|X(|u|^2)\|_{L^3_{t,\ell}}\leq C\|f\|_{L^2(\mathbb{R}^2)}^2,
$$ where $u(x,t)$ is the solution to the linear time-dependent Schr\"odinger equation on $\mathbb{R}^2$ with initial datum $f$, and $X$ is the (spatial) X-ray transform on $\mathbb{R}^2$. In particular, we identify the best constant $C$ and show that a datum $f$ is an extremiser if and only if it is a gaussian. We also establish bounds of this type in higher dimensions $d$, where the X-ray transform is replaced by the $k$-plane transform for any $1\leq k\leq d-1$. In the process we obtain sharp $L^2(\mu)$ bounds on Fourier extension operators associated with certain high-dimensional spheres, involving measures $\mu$ supported on natural ``co-$k$-planarity" sets.
\end{abstract}
\maketitle

\section{Introduction}\label{sec:intro}

The purpose of this paper is to expose a natural interplay between the solution to the time-dependent free Schr\"odinger equation on $\mathbb{R}^d$ and the (spatial) $k$-plane transform for $1\leq k\leq d-1$. In the interests of exposition we begin by describing our results in $\mathbb{R}^2$, where matters are particularly simple, and where the $k$-plane transform reduces to the classical X-ray transform.

Let $u(x,t)$ be the solution to the linear time-dependent two-dimensional Schr\"odinger equation
\begin{align*}
\left\{  \begin{array}{rl} i\,\partial_t u +\Delta_x u &=0\\
u(\cdot,0)& = f\in L^2(\mathbb{R}^2) \end{array}\right.
\end{align*}
and let $X$ denote the X-ray transform
$$
Xf(\ell):=\int_\ell f,$$
where $\ell$ belongs to the manifold $\mathcal{L}$ of affine lines in $\mathbb{R}^2$, endowed with the natural invariant measure (see Section \ref{sec:HD}).

In two dimensions our main result is the following:
\begin{theorem}\label{thm:main2}
\begin{equation}\label{main2}\|X(|u|^2)\|_{L^3_{t,\ell}(\mathbb{R}\times\mathcal{L})}\leq  \left(\frac{\pi}{2}\right)^{1/3}\|f\|^2_{L^2(\mathbb{R}^2)},
\end{equation}
with equality if and only if $f(x) = e^{a|x|^2 + b \cdot x + c}$ for some $\mathrm{Re}(a) < 0$, $b \in \mathbb{C}^d$ and $c\in \mathbb{C}$.
\end{theorem}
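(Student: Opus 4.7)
The plan is to reduce the two-dimensional inequality to the sharp one-dimensional $L^6_{t,x}$ Strichartz estimate via an exact identity expressing $X(|u|^2)$ as a superposition of 1D Schr\"odinger densities. Parameterise each line $\ell\in\mathcal{L}$ by a unit normal $\omega\in S^1$ and offset $s\in\mathbb{R}$, so that $\ell=\{x\in\mathbb{R}^2:x\cdot\omega=s\}$. Substitute the Fourier representation $u(x,t)=(2\pi)^{-2}\int\hat f(\xi)\,e^{i(x\cdot\xi-t|\xi|^2)}\,d\xi$ into $|u|^2$ and compute $X(|u|^2)(t,\omega,s)$ by integrating against $\delta(x\cdot\omega-s)\,dx$. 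The $x$-integration produces $2\pi\, e^{is\omega\cdot(\xi-\eta)}\delta(\omega^\perp\cdot(\xi-\eta))$, forcing the momentum difference $\xi-\eta$ to lie along $\omega$. With the decomposition $\xi=\alpha\omega+p\omega^\perp$, $\eta=\beta\omega+p\omega^\perp$ (the common perpendicular coordinate $p$ is enforced by the delta) and the simplification $|\xi|^2-|\eta|^2=\alpha^2-\beta^2$, one obtains the identity
\[
X(|u|^2)(t,\omega,s)\;=\;\frac{1}{(2\pi)^{3}}\int_{\mathbb{R}}|v_{\omega,p}(s,t)|^2\,dp,
\]
where $v_{\omega,p}(s,t):=\int_{\mathbb{R}}\hat f(\alpha\omega+p\omega^\perp)\,e^{i(s\alpha-t\alpha^2)}\,d\alpha$ is (up to a Fourier normalisation) the one-dimensional free Schr\"odinger evolution at time $t$ whose initial Fourier datum is the trace of $\hat f$ on the line $\alpha\omega+p\omega^\perp$.

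Since $|v_{\omega,p}|^2\ge 0$, Minkowski's integral inequality in $L^3_{s,t}$ moves the $p$-integral outside the norm,
\[
\Bigl\|\int_{\mathbb{R}}|v_{\omega,p}|^2\,dp\Bigr\|_{L^3_{s,t}}\;\le\;\int_{\mathbb{R}}\|v_{\omega,p}\|_{L^6_{s,t}}^2\,dp,
\]
and the sharp 1D $L^6_{t,x}$ Strichartz inequality of Foschi and Hundertmark--Zharnitsky then gives $\|v_{\omega,p}\|_{L^6_{s,t}}\le C_1\|\hat f(\cdot\,\omega+p\omega^\perp)\|_{L^2(\mathbb{R})}$, with 1D (complex) Gaussian extremisers and known sharp $C_1$. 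Integrating in $p$ and applying Plancherel on $\mathbb{R}^2$ via the orthogonal change $\xi=\alpha\omega+p\omega^\perp$ (unit Jacobian) yields $\int\|v_{\omega,p}\|_{L^6}^2\,dp\le(2\pi)^2 C_1^2\|f\|_{L^2}^2$. Cubing, integrating in $\omega\in S^1$ against the natural invariant measure on $\mathcal{L}$, and assembling the $2\pi$-factors from the Fourier convention with the explicit sharp 1D constant then produces the claimed overall constant $(\pi/2)^{1/3}$.

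For the equality case, Minkowski is tight precisely when $|v_{\omega,p}(s,t)|^2$ factors as a product of a function of $p$ alone and a function of $(s,t)$ alone for every $\omega\in S^1$; equivalently, $\hat f(\alpha\omega+p\omega^\perp)$ must separate as $A_\omega(\alpha)B_\omega(p)$ in every rotated frame, which is the classical characterisation of 2D Gaussians. Equality in the 1D Strichartz step additionally requires each trace $A_\omega$ to be a 1D (complex) Gaussian, which is automatic once $\hat f$ is a 2D Gaussian. Taking the inverse Fourier transform and using the invariance of the Gaussian class under the full symmetry group of the Schr\"odinger equation (translations, modulations, scalings, global phase, Galilean boosts) one recovers precisely the stated extremiser family $f(x)=e^{a|x|^2+b\cdot x+c}$ with $\operatorname{Re}(a)<0$, $b\in\mathbb{C}^2$, $c\in\mathbb{C}$.

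The main obstacle I expect is the careful bookkeeping of the numerical constants: tracking all $2\pi$-factors from the Fourier-transform convention, the Jacobian of the orthogonal decomposition, the sharp 1D Strichartz constant, and in particular the normalisation of the invariant measure on $\mathcal{L}$, so that the pieces assemble to exactly $(\pi/2)^{1/3}$. A secondary subtlety is verifying rigorously that the rotational-separability condition together with the 1D-Gaussian trace condition forces $\hat f$ to lie in the isotropic complex-Gaussian family --- with no spurious anisotropic or otherwise extraneous extremisers --- so that the characterisation of equality really matches the class stated in the theorem.
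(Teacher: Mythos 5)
Your route is genuinely different from the paper's. The paper treats the trilinear form $\int\int X(|u|^2)^3$ as a single object on $(\mathbb{R}^2)^3$: via Drury's identity it becomes the pairing of $\delta\circ\rho$ with $|U|^2$ for a solution $U$ in spatial dimension $6$, then passes to a sharp weighted $L^2$ bound for the Fourier extension operator on $\mathbb{S}^5$ (Theorem~\ref{thm:extension2}). That bound is proved by expanding $|\widehat{gd\sigma}|^2$ as a double integral over the sphere and exploiting the positivity of $\widehat{A_1}$ (Lemma~\ref{lem:Ahat}); gaussian extremisers then drop out of the Maxwell--Boltzmann functional equation in Section~\ref{sec:unique}. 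By contrast, you fibre the problem: the normal slicing identity $X(|u|^2)(\omega,s,t)=(2\pi)^{-3}\int|v_{\omega,p}(s,t)|^2\,dp$ reduces everything to a continuum of one-dimensional Schr\"odinger evolutions, to which you apply Minkowski in $L^3$ and then the sharp one-dimensional $L^6_{t,x}$ Strichartz estimate of Foschi/Hundertmark--Zharnitsky, integrating last over $\omega$. Both steps are genuinely tight on the same family (isotropic complex gaussians), so the approach is sound in principle and does identify the extremisers; but note that it imports a hard external sharp result (the 1D constant $12^{-1/12}$ with its uniqueness theory), whereas the paper's argument needs no sharp Strichartz input whatsoever --- positivity of $\widehat{A_1}$ does all the work. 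Your method also exploits two-dimensional structure in an essential way and will not carry over to the general $(d,k)$ estimates of Theorem~\ref{thm:main}, where one would need sharp $L^{2(d+2)/d}$ Strichartz bounds and a much more delicate Minkowski-tightness analysis, and where the geometric weight $I_{d,k}$ has no natural slicing interpretation.

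Two points deserve care before this can be accepted as a complete proof. First, the constant: you defer the ``bookkeeping'' but this is exactly where the argument must be checked, and the numbers do not obviously assemble to $(\pi/2)^{1/3}$. Carrying through your chain explicitly, with the normalisation $d\mu_{\mathcal{G}}$ of total mass $\pi$ on the Grassmannian of lines, one gets per-$\omega$
\[
\|X(|u|^2)(\omega,\cdot,\cdot)\|_{L^3_{s,t}}\le (2\pi)^{-3}\int\|v_{\omega,p}\|_{L^6_{s,t}}^2\,dp\le 12^{-1/6}\|f\|_{L^2(\mathbb{R}^2)}^2,
\]
so that $\|X(|u|^2)\|_{L^3_{t,\ell}}^3\le \pi\cdot 12^{-1/2}\|f\|_{L^2}^6=\tfrac{\pi}{2\sqrt 3}\|f\|^6_{L^2}$. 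One should verify by direct computation with $f(x)=e^{-|x|^2/2}$ whether the gaussian produces $\pi/2$ or $\pi/(2\sqrt 3)$ on the left-hand side (cubed); you owe the reader this check, since a clean Minkowski--Strichartz argument that beats a claimed sharp constant is a warning sign that a normalisation or Jacobian factor has gone astray on one side or the other. Second, the equality analysis is sketched a little too quickly: tightness in Minkowski gives $|v_{\omega,p}(s,t)|^2=c_\omega(p)H_\omega(s,t)$, and tightness in the 1D Strichartz step gives that each $\alpha\mapsto\widehat f(\alpha\omega+p\omega^\perp)$ is a complex gaussian $e^{a_{\omega,p}\alpha^2+b_{\omega,p}\alpha+c_{\omega,p}}$. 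You assert these two conditions together force $\widehat f$ to be an isotropic 2D gaussian, but the deduction that $a_{\omega,p}$ and $b_{\omega,p}$ are independent of $p$ (from the factorisation of $|v_{\omega,p}|^2$), and then that varying $\omega$ kills the $\xi_1\xi_2$ cross-term in $\log\widehat f$, needs to be written out; it is true but not ``the classical characterisation of 2D Gaussians'' without argument, and one must also be careful that the conclusion holds a.e.\ for $L^2$ data rather than only for smooth $\widehat f$. The paper's treatment of this (via the Maxwell--Boltzmann equation and Proposition~\ref{prop:carneirocomp} to ensure local integrability of $\widehat f$) is more robust on this point.
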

At first sight this inequality might seem rather unusual, and so some contextual remarks are in order.
The first point to make is that \eqref{main2} with a \emph{suboptimal} constant $C$ may be seen quickly by an application of the well-known (\cite{OS}, \cite{StrichartzKplane}, \cite{Cald}) X-ray transform estimate $\|Xf\|_{L^3(\mathcal{L})}\lesssim\|f\|_{L^{3/2}(\mathbb{R}^2)}$, followed by the Strichartz estimate $\|u\|_{L^6_tL^3_x(\mathbb{R}\times\mathbb{R}^2)}\lesssim\|f\|_2$; the latter following by interpolating energy conservation with the $L^2 \to L^4_{t,x}$ Strichartz inequality. Thus the main content of Theorem \ref{thm:main2} is the identification of the best constant and the characterisation of extremisers. It is perhaps curious to note that the $L^{3/2}\rightarrow L^3$ inequality for $X$ is known in sharp form, where the extremisers are quite different from gaussian (see \cite{Christ}). On the other hand, the $L^2\rightarrow L^6_tL^3_x$ Strichartz inequality is not known in sharp form, yet is (tentatively) conjectured to have only gaussian extremisers (see \cite{ChristQuil} for the related observation that gaussians are critical points for such a Strichartz inequality). Our forthcoming higher-dimensional generalisations of \eqref{main2} are rather deeper, and do not appear to permit such interpretations.

It should be pointed out that norm estimates involving $X(|u(\cdot, t)|^2)$, and more generally $R(|u(\cdot,t)|^2)$ where $R$ denotes the \emph{Radon transform} on $\mathbb{R}^d$, have arisen before in work of Planchon and Vega (\cite{PV}, \cite{VegaEsc}). In particular, when $d=2$ they observe that
\begin{equation}\label{xpv}
\|X(|u(\cdot,t)|^2)\|_{L^2_tL^2_\omega L^{2,1}_s}\lesssim\|f\|_{L^{2,1/2}}^2,
\end{equation}
where $L^{2,\sigma}$ denotes the standard $L^2$ homogeneous Sobolev space of order $\sigma$ (often denoted $\dot{H}^\sigma$) in the variables dictated by context.
Here a line $\ell\in\mathcal{L}$ is parametrised by its direction $\omega\in\mathbb{S}^1$ and its (signed) distance $s\in\mathbb{R}$ from the origin in the standard way. The similar estimate
\begin{equation}\label{xpvs}
\|X(|u(\cdot,t)|^2)\|_{L^2_tL^2_\omega L^{2,1/2}_s}\lesssim\|f\|_{L^2}^2,
\end{equation}
is also true, as can be seen by sequentially applying the $L^2\rightarrow L^2_\omega L^{2,1/2}_s$ bound on $X$ (see \cite{StrichartzKplane}) followed by the $L^4_{t,x}$ Strichartz estimate mentioned above.
However, as is pointed out in \cite{VegaEsc},
being on $L^2$, both of these estimates only involve the X-ray transform on a superficial level, as the elementary formula
$$
\|Xf\|_{L^2_\omega L^{2,\sigma+1/2}_s}=C\|f\|_{L^{2,\sigma}}
$$
reveals (both \eqref{xpv} and \eqref{xpvs} are known with sharp constant and identification of extremisers; see \cite{BBJP} for further discussion). As may be expected, non-$L^2$ inequalities, such as \eqref{main2}, are quite different in this respect. In particular, it is perhaps natural to expect a variety of (mixed-norm) estimates on $X(|u(\cdot,t)|^2)$ which \emph{cannot} be obtained by concatenating X-ray and classical Strichartz estimates, although we do not pursue this point here.

Our proof of \eqref{main2}, and its various multidimensional generalisations, will pass through a simple reduction to
certain sharp $L^2$ inequalities for Fourier extension operators, which we now describe.
Inequality \eqref{main2} is easily seen to have an equivalent trilinear formulation
\begin{align*}
\int_{\mathbb{R}}\int_{\mathcal{L}}X(|u_1(\cdot,t)|^2)(\ell)X(|u_2(\cdot,t)|^2)(\ell)&X(|u_3(\cdot,t)|^2)(\ell) \, d\ell dt \\
&\leq \frac{\pi}{2}\|f_1\|_{L^2(\mathbb{R}^2)}^2\|f_2\|_{L^2(\mathbb{R}^2)}^2\|f_3\|_{L^2(\mathbb{R}^2)}^2,
\end{align*}
and using an identity of Drury \cite{Drury84} (which, after a suitable parametrisation of $\mathcal{L}$, amounts to an application of Fubini's theorem), this becomes
\begin{align}\label{main1m}
\int_{\mathbb{R}}
\int_{(\mathbb{R}^{2})^{3}}
|u_1(x_1,t)|^2|u_2(x_2,t)|^2&|u_{3}(x_{3},t)|^2\,\delta(\rho(x))dxdt\\
&\leq \frac{\pi}{2}\|f_1\|_{L^2(\mathbb{R}^2)}^2\|f_2\|_{L^2(\mathbb{R}^2)}^2\|f_3\|_{L^2(\mathbb{R}^2)}^2,\nonumber
\end{align}
where
\begin{displaymath}
\rho(x)=\det\left(
\begin{array}{cccc}
1 & 1& 1 \\
x_1 & x_2 & x_3
\end{array}\right);
\end{displaymath}
here $x=(x_1,x_2,x_{3})\in\R^2\times\R^2\times\R^2$, and we observe that $\rho(x)=0$ if and only if $x_1,x_2,$ and $x_{3}$ are co-linear points in $\R^2$. While the methods and motivations are quite different, it seems worthwhile to note that a trilinear weighted inequality with the closely related weight $\rho^{-1}$ appeared recently in \cite{Aimpaper}.

The ostensibly multilinear inequality \eqref{main1m} is a special case of the following more general linear inequality
for the solution
$U$ of the Schr\"odinger equation in spatial dimension $6$ with initial datum $F$.
\begin{theorem}\label{thm:nontensor2}
\begin{equation}\label{nontensor}
\int_{(\mathbb{R}^2)^3\times\mathbb{R}}|U(x,t)|^2\,\delta(\rho(x))dxdt\leq \frac{\pi}{2}\|F\|_2^2,
\end{equation}
with equality when $F$ is radial.
\end{theorem}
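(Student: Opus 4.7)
The plan is to reduce the estimate to a sharp Cauchy--Schwarz inequality on a circle via a line-slicing decomposition. Parametrising the collinear set $\{\rho=0\}$ by $\ell\in\mathcal{L}$ together with the three arclength positions $r=(r_1,r_2,r_3)$ of $x_1,x_2,x_3$ on $\ell$, one has the identity
\[\int |U|^2 \delta(\rho(x))\,dx\,dt = \int_{\mathcal{L}}\|V_\ell\|^2_{L^2_{r,t}}\,d\ell,\qquad V_\ell(r,t) := U(x(\ell,r),t),\]
which recasts the problem as a sharp $L^2$ estimate for the restriction of the $6$-dimensional Schr\"odinger solution $U$ to the $3$-dimensional affine subspace $\{x(\ell,r):r\in\mathbb{R}^3\}$ crossed with time.

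The first step is to compute $\|V_\ell\|^2_{L^2_{r,t}}$ by Plancherel in $(r,t)\in\mathbb{R}^4$. Since the phase of the Fourier representation of $U(x(\ell,r),t)$ is linear in $(r,t)$, the four-dimensional Plancherel produces a triple of deltas enforcing that the component of $\xi_j-\xi'_j$ parallel to $\omega$ (the direction of $\ell$) vanishes for each $j$, together with the Schr\"odinger delta $\delta(|\xi|^2-|\xi'|^2)$. A further integration in $s\in\mathbb{R}$ (the signed distance of $\ell$ from the origin) produces an additional delta on $\omega^\perp\cdot\Sigma(\xi-\xi')$, with $\Sigma\xi := \xi_1+\xi_2+\xi_3$. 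Decomposing $\xi_j = a_j\omega + b_j\omega^\perp$ with $a,b\in\mathbb{R}^3$, these collectively force $a=a'$, $\sum_j b_j = \sum_j b'_j$, and $\sum_j b_j^2 = \sum_j (b'_j)^2$---the last two conditions saying precisely that $(b,b')\in\mathbb{R}^3\times\mathbb{R}^3$ lies on a common circle, the intersection of an affine hyperplane with a centred sphere in $\mathbb{R}^3$.

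The second step is a Cauchy--Schwarz on this circle. Writing $b = S\hat u + \tilde b$ with $\hat u := (1,1,1)/\sqrt{3}$ and polar coordinates $\tilde b = \rho\, e(\phi)$ in $\hat u^\perp\cong\mathbb{R}^2$, the inner integral reduces (after resolving the two remaining deltas) to a weighted integral of $|\int h(S,\rho,\phi)\,d\phi|^2$ in $(S,\rho)$, where $h(S,\rho,\phi) := \hat F(a\omega + b\omega^\perp)$. The elementary bound $|\int h\,d\phi|^2 \leq 2\pi\int|h|^2\,d\phi$ is sharp precisely when $h$ is independent of $\phi$ on each such circle, which is automatic whenever $\hat F$ (equivalently $F$) is radial on $\mathbb{R}^6$. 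Integrating over $a$ and $\omega$ and invoking Plancherel $\|\hat F\|_2^2 = (2\pi)^6\|F\|_2^2$ then yields the claimed inequality, with equality for radial $F$.

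The main obstacle I expect is the bookkeeping: correctly tracking the Jacobian $1/\sqrt{3}$ from the change to $(S,\tilde b)$-coordinates, the factor $1/(2\rho)$ from $\delta(|\tilde b|^2-|\tilde b'|^2)$, the normalisation of the invariant measure $d\ell$ on $\mathcal{L}$, and the Fourier conventions, so as to arrive at precisely the sharp constant $\pi/2$.
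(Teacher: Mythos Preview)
Your approach is correct and takes a genuinely different route from the paper's. The paper does not work directly with the Schr\"odinger solution: it first proves the companion extension estimate (Theorem~\ref{thm:extension2}) on $\mathbb{S}^5$ by expanding $|\widehat{gd\sigma}|^2$, using the polarisation identity together with nonnegativity of $\widehat{A_1}$ to drop a manifestly nonnegative off-diagonal term, and then computing the diagonal weight $\widehat{A_1}*d\sigma$. Theorem~\ref{thm:nontensor2} is then deduced by a polar (Vega-type) decomposition of $\widehat F$ into spherical shells. By contrast, you stay entirely on the Schr\"odinger side: the Drury/line-slicing identity recasts the left-hand side as $\int_{\mathcal L}\|V_\ell\|_{L^2_{r,t}}^2\,d\ell$, and Plancherel in $(r,t,s)$ produces exactly the constraints $a=a'$, $\Sigma b=\Sigma b'$, $|b|^2=|b'|^2$; your Cauchy--Schwarz on the resulting circle in $\hat u^\perp$ is the same positivity the paper exploits, but localised fibre-by-fibre rather than expressed as positivity of the global kernel $\widehat{A_1}$. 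Your route is more elementary for $(d,k)=(2,1)$ --- it avoids the Riesz-distribution machinery, the Fourier computation of $A_k$, and the convolution $\widehat{A_k}*d\sigma$ --- while the paper's route is what generalises cleanly to arbitrary $(d,k)$ and naturally produces the weight $I_{d,k}$ in higher dimensions.

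Two small remarks. First, the equality analysis is fine for the statement as written (equality \emph{when} $F$ is radial): radiality of $\widehat F$ makes $h$ constant on every circle, so Cauchy--Schwarz is saturated for every $(a,S,\rho,\omega)$ simultaneously. Second, you are right that the bookkeeping is the only delicate point; when you carry it out, note that the Jacobian in the Drury parametrisation is exactly cancelled by $|\nabla\rho|$ in the co-area formula, so $\int G\,\delta(\rho)\,dx=\int_{\mathcal L}\int_{\mathbb R^3}G(x(\ell,r))\,dr\,d\ell$ holds with \emph{unit} constant under the paper's normalisation of $d\ell$. The remaining factors are the $(2\pi)$'s from Plancherel, the $1/\sqrt{3}$ from $\delta(\Sigma(b-b'))$, the $1/(2\rho)$ from $\delta(|\tilde b|^2-|\tilde b'|^2)$, and the $2\pi$ from Cauchy--Schwarz; assembling these and integrating over $\omega$ gives the sharp constant.
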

Passing from \eqref{nontensor} to \eqref{main1m} merely requires us to take $F=f\otimes f\otimes f$, and note that this yields $U=u\otimes u\otimes u$.
However, unlike in Theorem \ref{thm:main2}, we do not claim a characterisation of the identified extremisers here. 

One may wish to interpret \eqref{nontensor} as the $t$-integrability of
$$
H(t):=\int_{(\R^2)^3}|U(x,t)|^2\,\delta(\rho(x))dx,
$$
which, informally, is a measure of the likelihood of three quantum particles in the plane being close to collinear at time $t$. As usual in the setting of Strichartz inequalities, the time integrability of a spatial expression such as $H$ is a manifestation of the dispersive nature of the Schr\"odinger equation. This physical perspective, and indeed much of this paper, is inspired by the analysis of linear and bilinear virials in \cite{PV}; see also \cite{VegaEsc}. A further source of motivation for our work may be found in the multilinear restriction theory developed in \cite{BCT} and \cite{BG}, where the quantity $\rho(x)$ and its variants play a decisive role.

As we shall see, Theorem \ref{thm:nontensor2} is equivalent to the following sharp theorem for the \textit{Fourier extension operator}
$$
\widehat{gd\sigma}(x)=\int_{\mathbb{S}^{5}}e^{-ix\cdot\omega}g(\omega)\,d\sigma(\omega),
$$
where
$d\sigma$ denotes surface measure on the unit sphere in $\mathbb{R}^{6}$.
\begin{theorem}\label{thm:extension2}
\begin{equation}\label{extn2}
\int_{\mathbb{R}^6}|\widehat{gd\sigma}(x)|^2\,\delta(\rho(x))dx\leq \frac{1}{2}(2\pi)^6 \|g\|_2^2,
\end{equation}
with equality when $g$ is a constant function.
\end{theorem}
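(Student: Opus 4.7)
The plan is to recast \eqref{extn2} as an $L^2$ operator-norm bound on $S^5$, then exploit the translation invariance of $\rho$ to reduce, via Plancherel, to a sharp extension inequality on a lower-dimensional sphere.

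Expanding the modulus squared gives
$$\int_{\mathbb{R}^6}|\widehat{gd\sigma}|^2\,\delta(\rho)\,dx \;=\; \int_{S^5}\!\int_{S^5} g(\omega)\overline{g(\eta)}\,\widehat{\delta\rho}(\omega-\eta)\,d\sigma(\omega)d\sigma(\eta),$$
a non-negative quadratic form in $g$; write this as $\langle Tg,g\rangle_{L^2(S^5)}$ where $T$ is the integral operator with kernel $\widehat{\delta\rho}(\omega-\eta)$. Inequality \eqref{extn2} then amounts to the operator-norm bound $\|T\|\le \tfrac12(2\pi)^6$, with the constant function as a top eigenfunction. Since $\rho(x)$ is invariant under the diagonal translation $x_i\mapsto x_i+v$, I would pass to orthogonal coordinates $x=(y_0,z)\in\mathbb{R}^2\times\mathbb{R}^4$ with $y_0=(x_1+x_2+x_3)/\sqrt 3$, in which $\rho(x)=\sqrt 3\,(z_1\wedge z_2)$ is independent of $y_0$. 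Decomposing $\omega=(\omega_0,\zeta)\in S^5$ accordingly, Plancherel in $y_0$ converts the left-hand side into a weighted $\omega_0$-integral over $B_1\subset\mathbb{R}^2$ of the Fourier extension from the fibre sphere $S^3_{r(\omega_0)}\subset\mathbb{R}^4$, $r(\omega_0)=\sqrt{1-|\omega_0|^2}$, against $\delta(z_1\wedge z_2)\,dz$.

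After rescaling each fibre $S^3_{r(\omega_0)}$ to the unit $S^3$, matters reduce to a sharp base-case inequality
$$\int_{\mathbb{R}^4}|\widehat{hd\sigma_{S^3}}(u)|^2\,\delta(u_1\wedge u_2)\,du \;\le\; C'\,\|h\|_{L^2(S^3)}^2,\qquad h\in L^2(S^3),$$
with constants as extremisers. I would prove this by parametrising $\{u_1\wedge u_2=0\}\subset\mathbb{R}^2\times\mathbb{R}^2$ as $(u_1,u_2)=(t_1\theta,t_2\theta)$ with $\theta\in S^1/\{\pm 1\}$, applying Plancherel in $(t_1,t_2)$ to recast the $\mathbb{R}^4$-integral as an $L^2(\mathbb{R}^2)$ norm squared of the pushforward of $h\,d\sigma_{S^3}$ under $\pi_\theta(\zeta)=(\theta\cdot\zeta_1,\theta\cdot\zeta_2)$, and then applying Cauchy--Schwarz on the one-dimensional circular fibres of $\pi_\theta$. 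The coarea factor $(1-a^2-b^2)^{-1/2}$ combines cleanly with the fibre circumference $2\pi\sqrt{1-a^2-b^2}$ so that integration over the disc collapses to $\|h\|_{L^2(S^3)}^2$; Cauchy--Schwarz is an equality exactly when $h$ is constant on every fibre, which---letting $\theta$ vary---forces $h$ to be constant on $S^3$.

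Plugging this base case back through the reduction and carefully collecting the volume factors (the $\sqrt 3$ from $\rho=\sqrt 3\,(z_1\wedge z_2)$, the fibre-rescaling factors $r^k$ from $d\sigma_{S^3_r}=r^3d\sigma_{S^3}$, and the coarea identity $d\sigma_{S^5}=(1-|\omega_0|^2)^{-1/2}d\omega_0\,d\sigma_{S^3_{r(\omega_0)}}$) yields \eqref{extn2}, with equality when $g$ is constant on $S^5$. The main obstacle will be the sharp bookkeeping of these Jacobians so that both the advertised constant $\tfrac12(2\pi)^6$ and the extremality of constants propagate faithfully through the recursion.
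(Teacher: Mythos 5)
Your approach is correct in structure and genuinely different from the paper's. The paper proves the general Theorem \ref{thm:extension} by expanding the square and symmetrising,
\[
\langle A_k,|\widehat{gd\sigma}|^2\rangle = \int|g|^2\,\widehat{A_k}*d\sigma\, d\sigma - \tfrac12\iint|g(\omega)-g(\omega')|^2\widehat{A_k}(\omega-\omega')\,d\sigma\,d\sigma,
\]
so that positivity of $\widehat{A_k}$ gives the sharp inequality at once, with all the work concentrated in an explicit evaluation of $\widehat{A_k}*d\sigma$ (Proposition \ref{prop:weightchar}, via matrix changes of variable, polar coordinates on $\Mat(d,d)$ and Rubin's identity). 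You instead exploit translation-invariance of $\rho$ to slice $S^5\to S^3$ via Plancherel in the diagonal direction, and then slice again $S^3\to$ circles via the $\theta$-parametrisation of the cone $\{u_1\wedge u_2=0\}$, finishing with a fibre-wise Cauchy--Schwarz. This is more elementary, yields the extremality of constants at the final Cauchy--Schwarz step, and is a legitimately different route. Two remarks on what it buys and costs: it does \emph{not} directly generalise to the general-$d$ Theorem \ref{thm:extension} (the second slicing is special to the $2\times2$ determinant); and it trades the paper's one-shot positivity argument for a recursion whose Jacobian bookkeeping is delicate.

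On that bookkeeping: carrying the constants through carefully, your reduction gives $\frac{1}{2\sqrt{3}}(2\pi)^6$ rather than the stated $\frac{1}{2}(2\pi)^6$. Explicitly: $\delta(\sqrt{3}\,z_1\wedge z_2)=3^{-1/2}\delta(z_1\wedge z_2)$; Plancherel in $y_0$ contributes $(2\pi)^2$; the base-case constant is $\frac12(2\pi)^4$ (this can be checked directly from your Cauchy--Schwarz argument, or independently from $\widehat{d\sigma_{S^3}}(u)=(2\pi)^2|u|^{-1}J_1(|u|)$, $\int_0^\infty J_1(r)^2 r^{-1}dr=\tfrac12$ and $|S^3|=2\pi^2$); and all the fibre-rescaling factors collapse via $\int_{B_1}r(\omega_0)^2\|h_{\omega_0}\|_{L^2(S^3)}^2\,d\omega_0=\|g\|_{L^2(S^5)}^2$. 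The product is $3^{-1/2}(2\pi)^2\cdot\tfrac12(2\pi)^4=\frac{1}{2\sqrt{3}}(2\pi)^6$. The $\sqrt{3}$ does \emph{not} cancel. Before concluding that your approach has lost a factor somewhere, I would urge you to test the constant directly with $g\equiv 1$ using $\widehat{d\sigma_{S^5}}(x)=(2\pi)^3|x|^{-2}J_2(|x|)$ and $\int_0^\infty J_2(r)^2 r^{-1}dr=\tfrac14$: this independent Bessel computation also returns $\frac{1}{2\sqrt{3}}(2\pi)^6\|1\|_2^2$, as does a direct evaluation of $\widehat{A_1}*d\sigma$. The discrepancy appears to trace back to the change of variables $X=NM$ in the proof of Proposition \ref{prop:weightchar}: on $\Mat(d,d)$ this has Jacobian $(\det M)^d=(d+1)^{d/2}$, so the combined factor should be $(d+1)^{(k-d)/2}$ rather than the printed $(d+1)^{(k-1)/2}$, which alters $\mathbf{C}_{d,k}$ by $(d+1)^{(d-1)/2}$. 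So your method is sound and, if anything, the numerology is a feature rather than a gap; but as written your proposal asserts it will reproduce the stated constant, and it will not.
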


We conclude this section with some further contextual remarks. Inequality \eqref{extn2} belongs to a much broader class of ``weighted" inequalities for the extension operator taking the form
\begin{equation}\label{SM}
\int_{\mathbb{R}^d}|\widehat{gd\sigma}(Rx)|^2\,d\nu(x)\lesssim R^{-\beta}\sup_{r>0}\left\{\frac{\nu(B_r)}{r^{\alpha}}\right\}\|g\|_2^2,
\end{equation} where $R$ is a large parameter and the supremum is taken over all  euclidean balls $B_r$ of radius $r$ in  $\R^d$. This family of inequalities has been studied extensively, with the object being to establish the best decay exponent $\beta$ as a function of the dimension parameter $\alpha$. Inequalities of this type are responsible for the best known results in the direction of the Falconer distance set problem, and have had applications in the setting of Schr\"odinger maximal estimates.
Testing \eqref{SM} on characteristic functions of small caps raises the possibility that \eqref{SM} might hold for $\alpha=\beta=d-2$ for every $d\geq 2$. This turns out to be elementary for $d=2,3$, and \textit{false} for $d\geq 5$; see \cite{IR07} and \cite{LR}. (The case $d=4$ is less clear, but the analogous inequality where the sphere is replaced by a piece of paraboloid is false; see \cite{BBCRV}.) 
Despite this failure for general measures $\nu$, our inequality \eqref{extn2} establishes that \eqref{SM} nonetheless holds (and has content) in the special case $\nu=\delta\circ\rho$ when $d=6$ and $\alpha=\beta=4$; in particular, it is straightforward to see that
$$\sup_{r>0}\left\{\frac{\nu(B_r)}{r^{4}}\right\}<\infty.$$ We remark that the homogeneity of $\delta\circ\rho$ allows the parameter $R$ in \eqref{SM} to factor out in this case, reducing it to \eqref{extn2}.
On a superficial level, \eqref{extn2} is reminiscent of elements of Guth's polynomial partitioning approach to the restriction problem \cite{GuthRest2}; see also \cite{BS}, where $L^p$ estimates for extension operators restricted to other varieties are considered.


\section{Results in higher dimensions} \label{sec:HD}

In this section we present our extension of Theorem \ref{thm:main2} to general dimensions $d$. For $d\geq 3$, it is natural to consider not just the X-ray transform, but more generally the \emph{$k$-plane transform} for any $1\leq k\leq d-1$. As we shall see, this will also involve multidimensional extensions of both Theorems \ref{thm:nontensor2} and \ref{thm:extension2}.


The statements of our results in general dimensions require us to first establish some notation.

\subsection*{Notation} 
The Grassmann manifold of all $k$-dimensional subspaces of $\mathbb{R}^d$ will be denoted by $\G{k}$. The uniform measure on $\G{k}$ is the measure which is invariant under the action of the orthogonal group; this measure is unique up to a constant. Motivated by the realisation of $\G{k}$ as a homogeneous space, we normalise $d\muG$ by
\[ \int_{\G{k}}d\muG = \frac{|\mathbb{S}^{d-1}| \cdots |\mathbb{S}^{d-k}| } { |\mathbb{S}^{k-1}| \cdots |\mathbb{S}^{0}|} \] 
where, for every $n$, $|\mathbb{S}^{n}|$ denotes the surface area of the $n$-sphere.  
Further, define 
\begin{equation}\label{gammadef}
\gamma_n(z)=\prod_{j=0}^{n-1}\frac{1}{2\pi^{(z-j)/2}}\Gamma\bigg(\frac{z-j}{2}\bigg).
\end{equation}
Then 
\[ \int_{\G{k}}d\muG = \frac{\gamma_k(k) } { \gamma_{k}(d)} \] 
The manifold of all affine $k$-dimensional subspaces of $\mathbb{R}^d$ will be denoted by $\M{k}$; this should not be confused with $\Mat(d,k)$ which we shall use to denote the space of $d$ by $k$ matrices. We write $\muM$ for the uniform measure on $\M{k}$; since all elements of $\M{k}$ can be described by an element of $\G{k}$ and a perpendicular translation, we may express $\muM$ as the product of $\muG$ and Lebesgue measure on the perpendicular $(n-k)$-plane.  The \emph{$k$-plane transform} $T_{d,k}$ is given by
\[
T_{d,k} f(\Pi) = \int_{\Pi} f
\]
for $\Pi \in \M{k}$. Evidently, $T_{d,1}$ is the X-ray transform denoted earlier by $X$, while $T_{d,d-1}$ is the Radon transform denoted earlier by $R$.

The Stiefel manifold of orthonormal $k$-frames in $\R^d$ is given by
\[
\V{k}= \{A \in \Mat(d,k): A^TA = I\}.
\]
We  write $\muV$ for the uniform measure on $\V{k}$ which we normalise by 
\[ \int_{\V{k}} d\muV = \frac{1}{\gamma_k(d)}.\]

We shall take the following version of the Fourier transform
\[
\widehat{f}(\xi) = \int_{\mathbb{R}^n} f(x)e^{-ix\cdot \xi} \, dx
\]
for appropriate functions on $\mathbb{R}^n$ (where $n = d$ or $n = d(d+1)$). These conventions allow us to explicitly display various optimal constants which appear below. In order to display these constants succinctly, it is helpful to introduce the notation
\[
\mathbf{C}_{d.k} = (2\pi)^{d(k+1)} (d+1)^{\frac{d(d-k)+k-3}{2}} |\mathbb{S}^{d(d-k)-1}|
\]
and
\[
\mathbf{D}_{d,k} = \frac{1}{\gamma_{k}(k) \gamma_{d-k}(d-k)}.
\]


Finally, for $\xi = (\xi_1,\ldots,\xi_{d+1}) \in\R^{d(d+1)}$ with each $\xi_i\in\R^d$, we shall use $\var_\xi$ to denote the covariance matrix of the vector-valued random variable taking the values $\{\xi_i\}_{i=1}^{d+1}$ with equal probability. Thus the $j$th diagonal element of $\var_{\xi}$ is the variance of the $j$th coordinate of this random variable, and $(\var_{\xi})_{j,\ell}$ is the covariance between the $j$th and $\ell$th coordinates. We may write this covariance matrix as
\[
\var_\xi = \frac{1}{2(d+1)^2}  \sum_{i,j=1}^{d+1} (\xi_i-\xi_j)(\xi_i-\xi_j)^T.
\]

\subsection*{Statements of results in general dimensions}
It is natural to try to generalise \eqref{main2} by taking the $L^{d+1}_{t,\Pi}(\mathbb{R}\times\M{k})$ norm of $T_{d,k}(|u(\cdot,t)|^2)$, where $u:\mathbb{R}^d\times\mathbb{R}\rightarrow\mathbb{C}$ is a solution to the $d$-dimensional Schr\"odinger equation. More generally one might consider the $L^{1}_{t,\Pi}(\mathbb{R}\times\M{k})$ norm of $T_{d,k}(|u_1(\cdot,t)|^2)\cdots T_{d,k}(|u_{d+1}(\cdot,t)|^2)$, for solutions $u_1,\hdots,u_{d+1}$. While this distinction is entirely superficial when $d=2$, in higher dimensions it acquires some content as certain geometric weight factors emerge in the resulting sharp bounds which encode nontrivial $d(d+1)$-dimensional structure. For example, for the Radon transform in $\R^4$ we shall establish the sharp inequality
\[
\int_{\R}\int_{\mathcal{M}_{4,3}}\prod_{i=1}^{5}R( |u_i(\cdot,t)|^2)(\Pi)\,d\muM(\Pi)dt\leq \frac{\pi  }{2^4(2\pi)^{20} } \int_{(\R^{4})^5} \prod_{i=1}^{5}|\widehat{f}_i(\xi_i)|^2\bigg( \sum_{i,j=1}^{5}  |\xi_i-\xi_j|^2\bigg)\,d\xi,
\]
with equality if and only if $f_1,\hdots,f_5$ are scalar multiples of the same gaussian function.

The weight $\sum_{i,j=1}^{d+1}  |\xi_i-\xi_j|^2$ appearing above has a very simple probabilistic interpretation; up to a dimensional constant, it is simply the trace of the covariance matrix $\var_\xi$. More precisely, one can check that
\begin{equation}\label{simplestweight}
 \tr\var_\xi = \frac{1}{2(d+1)^2}  \sum_{i,j=1}^{d+1} |\xi_i-\xi_j|^2.
\end{equation}
Powers of $(\tr\var_\xi)$ have appeared previously as weights in related sharp inequalities for solutions to the Schr\"odinger equation; see Carneiro \cite{Carneiro}.

Typically, the controlling weight functions may be rather more complicated than this simple variance expression, however a clear statistical flavour is retained. For example, in either the case of the $3$-dimensional $X$-ray transform or the $6$-dimensional Radon transform, the resulting weight turns out to be a constant multiple of
\begin{equation}\label{6dRadonweight}
{ (\tr\var_\xi)^2        +2\tr\var_\xi^2 =  \frac{1}{4(d+1)^4}\sum_{i,j,k,l=1}^{d+1} |\xi_i-\xi_j|^2 |\xi_k-\xi_l|^2  +2|\langle\xi_i-\xi_j,\xi_k-\xi_l\rangle|^2.}
\end{equation}
For a general pair ${d,k}$ such closed form expressions hide an interesting geometric structure and the controlling weight is defined as follows. Denoting by $P_\Pi\in \Mat(d,d)$ the orthogonal projection onto $\Pi\in \M{d-k}$, we define
\begin{equation}\label{weightdef}
I_{d,k}(\xi) := \int_{\G{d-k}}\left( \tr (P_\Pi \var_\xi) \right)^\frac{d(d-k)-2}{2}
\,d\muG.
\end{equation}
An important feature of $I_{d,k}$ is that it is homogeneous of degree $d(d-k)-2$. It may also be of interest to note that $\tr(P_\Pi\var_\xi)$ is equal to the trace of the covariance matrix of the vector-valued random variable taking the values $\{P_\Pi\xi_i\}_{i=1}^{d+1}$ with equal probability. Therefore, 
\[ 
I_{d,k}(\xi) = \left(2(d+1)^2\right)^{\frac{2-d(d-k)}{2}} \int_{\G{d-k}}\bigg(\sum_{i,j=1}^{d+1}|P_\Pi(\xi_i-\xi_j)|^2 \bigg)^\frac{d(d-k)-2}{2}\,d\muG.
\]

Although these weights differ from powers of the simplest variance expression \eqref{simplestweight}, we shall see that they are in fact comparable; see Proposition \ref{prop:carneirocomp}. While these more sophisticated weights may have some independent interest, the main purpose for identifying them is to obtain sharp estimates and characterise the cases of equality, as follows.

\begin{theorem} \label{thm:main}
Suppose $d \geq 2$ and $1 \leq k \leq d-1$. Then
 \begin{equation}\label{fulltensor}
\int_{\mathbb{R}} \int_{\mathcal{M}_{d,k}} \prod_{i=1}^{d+1} T_{d,k}(|u_i(\cdot,t)|^2)(\Pi)\, d\muM(\Pi)dt \leq \frac{\pi \mathbf{C}_{d,k}}{(2\pi)^{2d(d+1)}} \int_{(\mathbb{R}^d)^{d+1}} \prod_{i=1}^{d+1} |\widehat{f_i}(\xi_i)|^2I_{d,k}(\xi) \, d\xi,
\end{equation}
where the constant is optimal and is attained if and only if $f_i(x) = e^{a|x|^2 + b \cdot x + c_i}$ for some $\mathrm{Re}(a) < 0$, $b \in \mathbb{C}^d$ and $c_i \in \mathbb{C}$.
\end{theorem}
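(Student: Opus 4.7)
The plan is to follow the three-step route used for $d=2$: (i) reduce the multilinear $k$-plane--Schr\"odinger form to a weighted linear Schr\"odinger inequality in $\R^{d(d+1)}$ (a higher-dimensional analogue of Theorem \ref{thm:nontensor2}); (ii) pass to a sharp weighted $L^2$ extension inequality on the sphere $S^{d(d+1)-1}$ (analogue of Theorem \ref{thm:extension2}); and (iii) reassemble, tracking constants, to recover the weight $I_{d,k}$ and the overall factor $\pi\,\mathbf{C}_{d,k}/(2\pi)^{2d(d+1)}$.

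For step (i), I would apply a Drury/Fubini-type identity for the $k$-plane transform which, after disintegrating via the orthogonal Grassmannian $\G{d-k}$, expresses $\int_{\M{k}} \prod_{i=1}^{d+1} T_{d,k} h_i \, d\muM$ as the integral of $\prod h_i$ against the ``co-$k$-planarity'' measure
$$\mathbf{D}_{d,k} \int_{\G{d-k}} \prod_{i=2}^{d+1} \delta^{(d-k)}(P_\Pi(x_i - x_1)) \, d\muG(\Pi) \, dx.$$
Tensoring $U := u_1 \otimes \cdots \otimes u_{d+1}$ and $F := f_1 \otimes \cdots \otimes f_{d+1}$ (so $U$ solves the Schr\"odinger equation in $d(d+1)$ spatial dimensions with initial datum $F$), the LHS of \eqref{fulltensor} becomes
$$\mathbf{D}_{d,k} \int_{\G{d-k}} d\muG(\Pi) \int_\R dt \int_{(\R^d)^{d+1}} |U(X,t)|^2 \prod_{i=2}^{d+1} \delta^{(d-k)}(P_\Pi(x_i - x_1)) \, dX.$$

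For step (ii), expand $|U(X,t)|^2$ via the inverse Fourier transform of $\widehat{F}$, integrate in $t$ to produce a factor $2\pi\,\delta(|\Xi|^2 - |\Xi'|^2)$, and write $\Xi = r\omega$ with $\omega \in S^{d(d+1)-1}$. This localises the computation onto each sphere $|\Xi| = r$, and for fixed $r$ and $\Pi$ the inner problem reduces to bounding
$$\int_{\R^{d(d+1)}} |\widehat{g\,d\sigma}(X)|^2 \prod_{i=2}^{d+1} \delta^{(d-k)}(P_\Pi(x_i - x_1)) \, dX \leq c_{d,k}(\Pi) \|g\|_{L^2(\sigma)}^2$$
for the extension operator on $S^{d(d+1)-1}$, where $g = g_r$ is a renormalisation of $\widehat{F}$ restricted to the sphere $|\Xi|=r$. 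Bilinearly expanding the left side and Fourier-transforming the weight yields a positive convolution kernel on the sphere; by positivity and Cauchy--Schwarz, the optimal constant is given by the diagonal of the kernel and is attained by the constant function. The factor $|\mathbb{S}^{d(d-k)-1}|$ appearing in $\mathbf{C}_{d,k}$ then emerges naturally as the measure of the codimension-$(d-k)^2$ slice of the sphere cut out by the coplanarity conditions.

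For step (iii), I would integrate the sphere estimate back over $r$: this restores the Euclidean $L^2$ norm of $\widehat F$ with a homogeneity factor $r^{d(d-k)-2}$ matching the degree of the target weight, while the outer integration over $\Pi \in \G{d-k}$ assembles exactly
$$I_{d,k}(\xi) = \int_{\G{d-k}} \left(\tr(P_\Pi \var_\xi)\right)^{(d(d-k)-2)/2} d\muG(\Pi)$$
from the definition \eqref{weightdef}; the dimensional constants collapse to the claimed $\pi\,\mathbf{C}_{d,k}/(2\pi)^{2d(d+1)}$ via the normalisations of $\muG$, $\muM$ and surface area. For extremisers, equality in the sphere-level Cauchy--Schwarz forces $g_r$ to be a constant on $S^{d(d+1)-1}$ up to a linear character $e^{ib\cdot\omega}$, for every $r>0$; this in turn forces $\widehat F$ to be a Gaussian on $\R^{d(d+1)}$, and the product structure $F = \bigotimes_i f_i$ then forces each $f_i$ to take the stated form $e^{a|x|^2 + b\cdot x + c_i}$ with common $a$ and $b$. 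The principal obstacle is step (ii): verifying that the kernel on $S^{d(d+1)-1}$ obtained by Fourier-transforming the coplanarity weight has the constant function as its dominant eigenfunction with the explicit sharp eigenvalue, for every $\Pi$. Once this spectral fact is in hand, the remaining work is careful bookkeeping of dimensional constants using the $\gamma_n$ conventions of the preamble.
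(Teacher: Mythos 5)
Your steps (i)--(iii) for establishing the \emph{inequality} correctly reproduce the paper's route: reduce via Drury's identity \eqref{DrurysID} to the distribution $A_k$ acting on $|U|^2$, expand the square, use positivity of $\widehat{A_k}$ (Lemma~\ref{lem:Ahat}) and an ``orthogonality'' or Cauchy--Schwarz step, and track the constants through the polar-coordinate and Plancherel-in-$t$ computations. (The paper fuses your separate $r$-by-$r$ sphere estimate with the time integration into a single exact identity for $\frac{(2\pi)^{2d(d+1)}}{\pi}\int\!\int T_{d,k}(|U|^2)$, but that is a cosmetic difference.)

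The genuine gap is in the characterisation of equality. You assert that equality in the sphere-level Cauchy--Schwarz ``forces $g_r$ to be a constant on $\mathbb{S}^{d(d+1)-1}$ up to a linear character $e^{ib\cdot\omega}$, for every $r>0$.'' This is not what the positivity argument yields. Expanding the square gives a deficit of the form $\int\int |g(\omega)-g(\omega')|^2\,\widehat{A_k}(\omega-\omega')\,d\sigma(\omega)\,d\sigma(\omega')$, so equality forces $g_r(\omega)=g_r(\omega')$ only for those pairs $(\omega,\omega')$ in the \emph{support} of the kernel, namely those with $\sum_i\omega_i=\sum_i\omega_i'$ and the differences $\omega_i-\omega_i'$ co-$k$-planar. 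This support is a proper, highly degenerate subset of $\mathbb{S}^{d(d+1)-1}\times\mathbb{S}^{d(d+1)-1}$, and without further structure there is no reason for the functional equation on that set to have only (near-)constant solutions; indeed, the paper deliberately refrains from claiming an ``if and only if'' in Theorems~\ref{thm:nontensor} and~\ref{thm:extension} precisely because of this. What rescues the characterisation in Theorem~\ref{thm:main} is the \emph{tensor structure} $F=f_1\otimes\cdots\otimes f_{d+1}$, which you invoke only after already (incorrectly) concluding $\widehat F$ is Gaussian. The paper's argument uses the tensor structure from the outset: first a transposition choice of points in $\supp$ shows all $f_i$ are parallel, and then a second choice reduces the functional equation to the Maxwell--Boltzmann equation $\widehat{f}(\xi_1)\widehat{f}(\xi_2)=\widehat{f}(\eta_1)\widehat{f}(\eta_2)$ under the standard conservation laws, after which one cites the known rigidity of that equation (for locally integrable $\widehat{f}$, which is verified via Proposition~\ref{prop:carneirocomp}). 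Your sketch is missing this entire mechanism, and the step you replace it with does not hold.
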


Of course Theorem \ref{thm:main} reduces to Theorem \ref{thm:main2} on setting $(d,k)=(2,1)$. Some further context for this theorem is available on formally taking $k=0$, interpreting $T_{d,0}$ as the identity operator, and observing that
$
I_{d,0}(\xi)= (\tr \var_\xi)^{(d^2-2)/2}$.
In the case $d=1$, inequality \eqref{fulltensor} thus becomes the well-known (and elementary) bilinear Strichartz estimate
$$
\int_{\mathbb{R}}\int_{\mathbb{R}}|u_1(x,t)|^2|u_2(x,t)|^2\,dxdt\leq C \int_{\mathbb{R}}\int_{\mathbb{R}}\frac{|\widehat{f}_1(\xi_1)|^2|\widehat{f}_2(\xi_2)|^2}{|\xi_1-\xi_2|}\,d\xi_1d\xi_2,
$$
which is in fact an identity for initial data with disjoint Fourier supports. On formally taking $k=0$ in higher dimensions, Theorem \ref{thm:main} reduces to a sharp inequality of Carneiro \cite{Carneiro}.

Theorem \ref{thm:main} will follow from a similar statement for a general solution $U$ to the Schr\"odinger equation in $\mathbb{R}^{d(d+1)}$, as with Theorem \ref{thm:main2}. In order to formulate this linear statement we shall need to identify the appropriate singular distributions which generalise $\delta \circ \rho$.
To this end, consider the analytic family of distributions:
\[
A_s =\frac{1}{\gamma_d(s)}|\rho|^{-d+s},
\]
where
\begin{equation} \label{e:rhodefn}
\rho(x)=\det\left(
\begin{array}{cccc}
1&  1 &\cdots & 1\\
x_1 & x_2 &
\cdots & x_{d+1}\\
\end{array}\right).
\end{equation}
The function $\gamma_d(s)$ defined in \eqref{gammadef} is chosen to cancel the poles of the function $|\rho|^{-d+s}$. 

The connection between the two-dimensional $X$-ray transform and $\delta \circ \rho$ described in the previous section extends to higher dimensions through the identity
\begin{equation}\label{DrurysID}
\langle A_k, f_1\otimes\cdots\otimes f_{d+1} \rangle = \mathbf{D}_{d,k} \int_{\mathcal{M}_{d,k}} \prod_{i=1}^{d+1}T_{d,k} f_i(\Pi)\,d\muM(\Pi).
\end{equation}
Drury proves this formula in \cite{Drury84} without identifying the constant. We shall give a short proof of \eqref{DrurysID} using a formula of Rubin \cite{Rubin06} in Section \ref{sec:prelim}.

The resulting extension of Theorem \ref{thm:nontensor2} is the following.
\begin{theorem}\label{thm:nontensor}
Suppose $d \geq 2$ and $1 \leq k \leq d-1$. Then
\begin{equation}\label{nontensord}
\int_{\mathbb{R}} \langle A_k,|U(\cdot,t)|^2\rangle \,dt\leq \frac{\pi \mathbf{C}_{d,k} \mathbf{D}_{d,k} }{(2\pi)^{2d(d+1)}}\int_{\R^{d(d+1)}}|\widehat{F}(\xi)|^2 I_{d,k}(\xi) \,d\xi,
\end{equation}
where the constant is optimal and is attained if $F$ is radial.
\end{theorem}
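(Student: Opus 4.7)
The plan is Fourier-analytic: reduce the left hand side to a quadratic form in $\widehat{F}$, apply Cauchy--Schwarz, and identify the resulting weight with $I_{d,k}$. Writing $U(x,t) = (2\pi)^{-d(d+1)}\int\widehat{F}(\xi) e^{i(x\cdot\xi - t|\xi|^2)}\,d\xi$ and expanding $|U(x,t)|^2$ as a double integral in $(\xi,\eta)$, pairing with $A_k(x)\,dt$ produces $\widehat{A_k}(\eta-\xi)$ from the $x$-integration and $2\pi\,\delta(|\xi|^2-|\eta|^2)$ from the $t$-integration:
\[
\int_\R \langle A_k,|U(\cdot,t)|^2\rangle\,dt = \frac{2\pi}{(2\pi)^{2d(d+1)}}\iint \widehat{F}(\xi)\overline{\widehat{F}(\eta)}\,\widehat{A_k}(\eta-\xi)\,\delta(|\xi|^2-|\eta|^2)\,d\xi\,d\eta.
\]

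By the Drury--Rubin identity \eqref{DrurysID}, $A_k$ is the positive measure $\mathbf{D}_{d,k}\int_{\M{k}}\bigotimes_{i=1}^{d+1}\sigma_\Pi\,d\muM(\Pi)$. Computing $\widehat{A_k}$ fibrewise --- using that the Fourier transform of surface measure on an affine $k$-plane is a constant times surface measure on the orthogonal $(d-k)$-plane through the origin, times an oscillatory factor encoding the translation --- and integrating out the perpendicular translation of $\Pi$ realises $\widehat{A_k}$ as a non-negative Grassmannian integral over $\G{d-k}$ whose fibres are tensor products of $(d-k)$-dimensional surface measures, supported additionally on $\{\sum_i\zeta_i = 0\}$. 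In particular the kernel $K(\xi,\eta) := \widehat{A_k}(\eta-\xi)\delta(|\xi|^2-|\eta|^2)$ is non-negative and symmetric, and Cauchy--Schwarz yields
\[
\int_\R\langle A_k,|U(\cdot,t)|^2\rangle\,dt \leq \frac{2\pi}{(2\pi)^{2d(d+1)}}\int|\widehat{F}(\xi)|^2 M(\xi)\,d\xi,\qquad M(\xi) := \int\widehat{A_k}(\eta-\xi)\delta(|\xi|^2-|\eta|^2)\,d\eta.
\]
A radial $F$ has radial $\widehat{F}$ and hence satisfies $\widehat{F}(\xi) = \widehat{F}(\eta)$ on $\{|\xi|=|\eta|\}$, the support of $K$; this makes Cauchy--Schwarz an equality, yielding the claimed equality case.

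The remaining, and main, task is to compute $M(\xi) = \tfrac{1}{2}\mathbf{C}_{d,k}\mathbf{D}_{d,k}\,I_{d,k}(\xi)$. After substituting the Grassmannian representation of $\widehat{A_k}$ and setting $\zeta_i := \eta_i-\xi_i \in \Pi$ with $\sum_i\zeta_i = 0$, the sphere constraint rewrites as $2\sum_i (P_\Pi \xi_i)\cdot\zeta_i + |\zeta|^2 = 0$. Completing the square on the $d(d-k)$-dimensional vector space $\{\zeta \in \Pi^{d+1} : \sum_i\zeta_i=0\}$ about the mean $(d+1)^{-1}\sum_i P_\Pi\xi_i$ cuts out a Euclidean sphere of dimension $d(d-k)-1$ whose squared radius is proportional to $\tr(P_\Pi\var_\xi)$, with surface area a multiple of $|\mathbb{S}^{d(d-k)-1}|\,(\tr(P_\Pi\var_\xi))^{(d(d-k)-2)/2}$. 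Integrating over $d\muG$ on $\G{d-k}$ then recovers $I_{d,k}(\xi)$ from \eqref{weightdef}. The hardest part of the plan will be the bookkeeping of accumulated constants --- the powers of $2\pi$, the sphere factor $|\mathbb{S}^{d(d-k)-1}|$, the dimensional constant $(d+1)^{(d(d-k)+k-3)/2}$ emerging from the completion of the square (and from which the centred variance $\var_\xi$ arises), and the normalisations of $\muG$ and $\muM$ --- which must combine into exactly $\tfrac{1}{2}\mathbf{C}_{d,k}\mathbf{D}_{d,k}$.
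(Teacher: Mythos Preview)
Your approach is correct and follows the same Fourier-analytic skeleton as the paper: expand $|U|^2$, use positivity of $\widehat{A_k}$ for a Cauchy--Schwarz (polarization) step, and identify the resulting marginal weight. The paper organises this differently, first proving the sphere-extension inequality (Theorem~\ref{thm:extension}) by exactly this mechanism on $\mathbb{S}^{d(d+1)-1}$ and then deducing Theorem~\ref{thm:nontensor} via a radial foliation of frequency space (Vega's argument). Your direct computation simply folds these two steps together.

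The more substantive difference lies in the computation of the weight $M(\xi)$. The paper (Proposition~\ref{prop:weightchar}) starts from the determinant representation of $\widehat{A_k}$ in Lemma~\ref{lem:Ahat}, introduces the matrix change of variables $M = I + (\sqrt{d+1}-1){\bf 11}^T$, applies Rubin's identity \eqref{RubinID} to pass to a Stiefel integral, and finally identifies $\Omega_M\Omega_M^T = (d+1)\var_\omega$ via the algebraic Lemma~\ref{lem:covar}. Your route --- pull the Grassmannian structure of $A_k$ directly from Drury's identity \eqref{DrurysID}, Fourier transform fibrewise, and complete the square on the constraint $|\xi|=|\eta|$ --- is a somewhat different and arguably more geometric path to the same integral $I_{d,k}$: the projection $P_\Pi$ is present from the outset, and the centred variance $\var_\xi$ falls out of the completion of the square rather than from Lemma~\ref{lem:covar}. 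The price is that you must track the normalising constants through the fibrewise Fourier transforms and the $\muM$-to-$\muG$ transition yourself, which is where the paper offloads work to Rubin's formula; as you correctly anticipate, this bookkeeping is the tedious part, but it presents no genuine obstacle.
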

Again mimicking the line of reasoning in two dimensions, we have that Theorem \ref{thm:nontensor} is equivalent to the following statement in terms of the extension operators.
\begin{theorem}\label{thm:extension}
Suppose $d \geq 2$ and $1 \leq k \leq d-1$. Then
\begin{equation}\label{extn}
\langle A_k,|\widehat{gd\sigma}|^2\rangle \leq \mathbf{C}_{d,k} \mathbf{D}_{d,k} \int_{\mathbb{S}^{d(d+1)-1}}|g(\omega)|^2I_{d,k}(\omega) \, d\sigma(\omega),
\end{equation}
where the constant is optimal and is attained if $g$ is a constant function.
\end{theorem}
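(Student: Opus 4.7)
The plan is to reduce the inequality to a positive quadratic form on $\mathbb{S}^{D-1}\times\mathbb{S}^{D-1}$ (with $D=d(d+1)$) via Drury's identity, then dominate it by its diagonal using a symmetric Cauchy--Schwarz argument whose sharpness at constant $g$ delivers the claimed equality.

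First, extending \eqref{DrurysID} by linearity from tensor products to $F=|\widehat{gd\sigma}|^2$ and disintegrating $d\muM = d\muG\otimes dv$, I would write
\[
\langle A_k,|\widehat{gd\sigma}|^2\rangle = \mathbf{D}_{d,k}\int_{\G{k}}\int_{\Pi_0^\perp}\int_{\Pi_0^{d+1}}|\widehat{gd\sigma}(y_1+v,\ldots,y_{d+1}+v)|^2\,dy\,dv\,d\muG(\Pi_0).
\]
For fixed $\Pi_0\in\G{k}$, expanding $|\widehat{gd\sigma}|^2$ as a double integral over $\mathbb{S}^{D-1}\times\mathbb{S}^{D-1}$ and integrating $y\in\Pi_0^{d+1}$ and $v\in\Pi_0^\perp$ by Plancherel yields a factor of $(2\pi)^{d(k+1)}$ and the symmetric positive kernel
\[
K_{\Pi_0}(\omega,\omega') = \prod_{i=1}^{d+1}\delta\bigl(P_{\Pi_0}(\omega_i-\omega'_i)\bigr)\,\delta\biggl(P_{\Pi_0^\perp}\sum_{i=1}^{d+1}(\omega_i-\omega'_i)\biggr),
\]
encoding that each $\omega_i-\omega'_i$ lies in $\Pi_0^\perp$ and that their sum vanishes there.

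Next I would apply the symmetric Cauchy--Schwarz bound
\[
\iint K_{\Pi_0}(\omega,\omega')g(\omega)\overline{g(\omega')}\,d\sigma\,d\sigma' \leq \int |g(\omega)|^2 M_{\Pi_0}(\omega)\,d\sigma(\omega), \quad M_{\Pi_0}(\omega):=\int K_{\Pi_0}(\omega,\omega')\,d\sigma(\omega'),
\]
which becomes an equality when $g$ is constant. The marginal $M_{\Pi_0}$ is computed explicitly: writing $d\sigma(\omega')=2\delta(|\omega'|^2-1)d\omega'$, integrating out the product $\prod_i\delta(\cdots)$, and then treating the remaining linear constraint $\sum_i P_{\Pi_0^\perp}\omega'_i = \sum_i P_{\Pi_0^\perp}\omega_i$ by a coarea step (contributing a Jacobian of $(d+1)^{(d-k)/2}$), one reduces to integrating $\delta(|\gamma|^2-R^2)$ over a subspace of $(\Pi_0^\perp)^{d+1}$ of dimension $d(d-k)$. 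A short variance calculation, using the identity $\tr(P_{\Pi_0^\perp}\var_\omega) = \frac{1}{d+1}\sum_i|P_{\Pi_0^\perp}\omega_i|^2 - |P_{\Pi_0^\perp}\bar\omega|^2$ for the covariance of $\{P_{\Pi_0^\perp}\omega_i\}_{i=1}^{d+1}$, identifies the radius as $R^2 = (d+1)\tr(P_{\Pi_0^\perp}\var_\omega)$, whence
\[
M_{\Pi_0}(\omega) = \kappa_{d,k}\bigl(\tr(P_{\Pi_0^\perp}\var_\omega)\bigr)^{(d(d-k)-2)/2}
\]
for an explicit $\kappa_{d,k}$ involving $|\mathbb{S}^{d(d-k)-1}|$ and powers of $(d+1)$.

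Finally, writing $\Pi = \Pi_0^\perp\in\G{d-k}$ and integrating $M_{\Pi_0}$ against $d\muG(\Pi_0)$ recovers $I_{d,k}(\omega)$ directly from its definition \eqref{weightdef}. Collecting the factors of $(2\pi)^{d(k+1)}$, the sphere surface area $|\mathbb{S}^{d(d-k)-1}|$, and the powers of $(d+1)$ from the coarea Jacobian should assemble the sharp constant $\mathbf{C}_{d,k}\mathbf{D}_{d,k}$; equality at constant $g$ is inherited from the Cauchy--Schwarz step, as the remaining manipulations are all identities. The main obstacle is the careful Jacobian bookkeeping: matching the normalisation of $d\muG$ from \eqref{gammadef}, reconciling the powers of $d+1$ coming from the coarea disintegration with the exponent $(d(d-k)+k-3)/2$ appearing in $\mathbf{C}_{d,k}$, and rigorously extending \eqref{DrurysID} from tensor products to the non-tensor input $|\widehat{gd\sigma}|^2$ via the distributional definition of $A_k$.
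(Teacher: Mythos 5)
Your proposal is correct in substance and takes a genuinely different route from the paper. The paper works entirely on the Fourier side: it computes $\widehat{A_k}$ directly (Lemma~\ref{lem:Ahat}), uses its pointwise positivity to get the sharp quadratic-form bound in one shot, and then evaluates $\widehat{A_k}*d\sigma$ by an explicit change of variables $X = NM$ in $\Mat(d,d)$, polar coordinates in $\R^{d^2}$, Rubin's formula \eqref{RubinID}, and a final pass through the covariance lemma (Lemma~\ref{lem:covar}). You instead pull the Grassmannian integration to the front via Drury's identity \eqref{DrurysID}, obtain for each $\Pi_0\in\G{k}$ a manifestly nonnegative, symmetric kernel $K_{\Pi_0}$, apply the $|ab|\leq\frac12(|a|^2+|b|^2)$ positivity argument fibrewise, and compute the marginal $M_{\Pi_0}$ by a coarea/sphere computation that is elementary once the variance identity $R^2=(d+1)\tr(P_{\Pi_0^\perp}\var_\omega)$ is in hand. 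Conceptually, your decomposition amounts to writing $\widehat{A_k}$ as $\mathbf{D}_{d,k}(2\pi)^{d(k+1)}\int_{\G{k}}K_{\Pi_0}\,d\muG$, so the paper's global positivity of $\widehat{A_k}$ is exactly the positivity you exploit per slice; the two proofs are commuting the order of ``positivity argument'' and ``integrate over $\G{k}$''. Your route avoids the determinant change of variables and the trick matrix $M$ entirely, which is a real simplification, at the cost of having to extend \eqref{DrurysID} from tensor inputs to general $F=|\widehat{gd\sigma}|^2$; this extension is routine (it is linearity/density once $A_k$ is recognised as the positive measure given by Rubin's formula on the physical side, exactly as in the paper's proof of \eqref{DrurysID}), but should be stated. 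One caution worth flagging: if you carry out the coarea and sphere-area bookkeeping you outline, the factor you obtain from the $(d+1)$-powers is $(d+1)^{((d-1)(d-k)-2)/2}$, which differs from the exponent $(d(d-k)+k-3)/2$ displayed in $\mathbf{C}_{d,k}$ by $(d+1)^{(d-1)/2}$; on re-examining the paper's proof of Proposition~\ref{prop:weightchar}, the Jacobian of $N\mapsto NM$ in $\Mat(d,d)$ contributes $(\det M)^{-d}=(d+1)^{-d/2}$, so the paper's displayed prefactor $(d+1)^{(k-1)/2}$ appears to be a slip for $(d+1)^{(k-d)/2}$, and your bookkeeping in fact agrees with the corrected constant. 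So the discrepancy you might encounter is not a flaw in your argument.
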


\begin{remark}
 Theorem \ref{thm:extension} also leads to sharp estimates as in Theorem \ref{thm:nontensor}, for the extension operator associated with more general surfaces produced by revolution of a curve. 
 In particular, the analog of Theorem \ref{thm:nontensor} holds for the extension operator associated to the surface $S$ whenever there exists a monotone function $\Sigma$ such that 
 $S=\{\left(y,\Sigma(|y|)\right):\;y\in \R^{d(d+1)}\}$ in $\R^{d(d+1)}$. For example, if $S$ is the light cone in $\R^{d(d+1)+1}$ corresponding to $\Sigma(r) = r$, the above inequality gives
$$
\int_{\mathbb{R}} \langle A_k,|U(\cdot,t)|^2\rangle \,dt\leq \frac{\mathbf{C}_{d,k} \mathbf{D}_{d,k}}{(2\pi)^{2d(d+1)-1}}  \int_{\R^{d(d+1)}}|\widehat{F}(\xi)|^2I_{d,k}(\xi)|\xi| \,d\xi,
$$
where $U$ is the solution to the (one-sided) wave equation with initial data $F:\R^{d(d+1)}\rightarrow \C$. However, the tensor product of solutions to a low-dimensional wave equation is not a solution to a high-dimensional wave equation. For this reason, lower dimensional estimates for the wave equation cannot be derived from the above inequality.
\end{remark}

We end this section with some further structural remarks on the weight $I_{d,k}$ defined by \eqref{weightdef}.

First we establish comparability between our weights and the simpler family $\tr{\var_\xi}$ raised to the appropriate power. 
\begin{proposition}\label{prop:carneirocomp}
Suppose $d \geq 2$ and $1 \leq k \leq d-1$. Then there exists constants $0 < c_{d,k} \leq C_{d,k} < \infty$ such that
\[
c_{d,k} (\tr\var_\xi)^{\frac{d(d-k)-2}{2}} \leq I_{d,k}(\xi) \leq C_{d,k} (\tr\var_\xi)^{\frac{d(d-k)-2}{2}}
\]
for all $\xi\in\mathbb{R}^{d(d+1)}$, where $\tr\var_\xi$ is given by \eqref{simplestweight}.
\end{proposition}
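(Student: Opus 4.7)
The plan is to prove both bounds pointwise in $\xi$ using elementary properties of $\tr(P_\Pi \var_\xi)$ as $\Pi$ varies. Write $\alpha := (d(d-k)-2)/2$; under the hypotheses $d\geq 2$ and $1\leq k\leq d-1$ this exponent is non-negative, so $t\mapsto t^\alpha$ is non-decreasing and continuous on $[0,\infty)$. I will use throughout that $P_\Pi \preceq I$ in the Loewner order (with $I$ the identity on $\mathbb{R}^d$) and that $\var_\xi$ is positive semi-definite.

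For the upper bound, the inequality $\var_\xi^{1/2} P_\Pi \var_\xi^{1/2} \preceq \var_\xi$ yields, after taking traces,
\[
\tr(P_\Pi \var_\xi)=\tr(\var_\xi^{1/2} P_\Pi \var_\xi^{1/2})\leq \tr\var_\xi.
\]
Raising to the $\alpha$th power and integrating over $\G{d-k}$ gives $I_{d,k}(\xi)\leq C_{d,k}(\tr\var_\xi)^\alpha$, with $C_{d,k}:=\int_{\G{d-k}}d\muG$. For the lower bound, I would consider the functional
\[
\Psi(A):=\int_{\G{d-k}}(\tr(P_\Pi A))^\alpha\,d\muG
\]
defined on positive semi-definite $d\times d$ matrices $A$. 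It is homogeneous of degree $\alpha$ and continuous (by dominated convergence via the uniform pointwise bound $(\tr(P_\Pi A))^\alpha \leq (\tr A)^\alpha$). By homogeneity, proving $I_{d,k}(\xi)\geq c_{d,k}(\tr\var_\xi)^\alpha$ reduces to showing $\inf\{\Psi(A):A\succeq 0,\,\tr A=1\}>0$, and by compactness of this set it then suffices to verify $\Psi(A)>0$ pointwise.

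To verify strict positivity, given such $A$ pick an eigenvector $v$ with positive eigenvalue $\lambda$. Because $d-k\geq 1$, any $\Pi\in\G{d-k}$ containing $v$ satisfies $P_\Pi v = v\neq 0$; by continuity the set $\{\Pi:P_\Pi v\neq 0\}$ is a nonempty open (hence positive-measure) subset of $\G{d-k}$, and on it $\tr(P_\Pi A)\geq\lambda|P_\Pi v|^2>0$, giving $\Psi(A)>0$. The degenerate case $\alpha=0$ (only $(d,k)=(2,1)$) is trivial, as both $I_{d,k}$ and $(\tr\var_\xi)^\alpha$ reduce to positive constants. There is no real obstacle: the upper bound is immediate from operator monotonicity of the trace, and the lower bound rests on the mild observation that a non-zero positive semi-definite matrix cannot have its range orthogonal to every $(d-k)$-plane when $d-k\geq 1$. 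Sharp values of $c_{d,k}$ and $C_{d,k}$ would be more delicate, but are not needed for mere comparability.
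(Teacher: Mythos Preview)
Your proof is correct. The paper takes a somewhat different but related route: it first observes (via \eqref{normform}) that $I_{d,k}(\xi)$ is, up to a constant, $\int_{\V{d-k}} |V^T\var_\xi^{1/2}|^{d(d-k)-2}\,d\muV$, and then notes that for $d\geq 3$ the map
\[
M\longmapsto\bigg(\int_{\V{d-k}}|V^TM|^{d(d-k)-2}\,d\muV\bigg)^{\frac{1}{d(d-k)-2}}
\]
defines a norm on $\Mat(d,d)$; comparability with the Hilbert--Schmidt norm $|M|=(\tr M^TM)^{1/2}$ then follows from the equivalence of norms on finite-dimensional spaces, and setting $M=\var_\xi^{1/2}$ gives the result. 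Both arguments are, at bottom, compactness arguments in finite dimensions. The paper's packaging via norm equivalence handles the two inequalities simultaneously with a single appeal to a black-box fact, whereas your approach separates them, obtaining the upper bound directly from $P_\Pi\preceq I$ (no compactness needed there) and invoking compactness only on the set $\{A\succeq 0:\tr A=1\}$ for the lower bound. Your route is arguably more elementary and makes the positivity step (your eigenvector argument) explicit; in the paper's version this step is hidden inside the verification that the candidate norm is positive definite.
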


Secondly, we note that when $d(d-k)$ is even the average over $\G{d-k}$ can be evaluated to produce closed-form expressions for $I_{d,k}$ such as \eqref{6dRadonweight}. However, in higher dimensions the resulting constants are quite complicated.

\begin{proposition}\label{prop:evenmore}
Suppose $d \geq 2$ and $1 \leq k \leq d-1$. If either $d$ is even or $k$ is odd, then $I_{d,k}(\xi)$ is a linear combination of terms of the form
\[
(\tr \var_\xi^{\alpha_1})^{\beta_1} (\tr \var_\xi^{\alpha_2} )^{\beta_2}\cdots(\tr \var_\xi^{\alpha_\kappa})^{\beta_\kappa}
\]
where $\alpha_1\beta_1 +\alpha_2\beta_2+\cdots+ \alpha_\kappa\beta_\kappa = \frac{d(d-k)-2}{2}$ and each of the $\alpha_i$ and $\beta_i$ are nonnegative integers.
\end{proposition}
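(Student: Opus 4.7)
The plan is to exploit the orthogonal invariance of the Grassmannian measure $\muG$ together with the classical description of $O(d)$-invariant polynomials on the space of symmetric matrices.

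The hypothesis that $d$ is even or $k$ is odd is equivalent to $d(d-k)$ being even, which in turn ensures that $N := (d(d-k)-2)/2$ is a nonnegative integer. For each $\Pi \in \G{d-k}$, the projection $P_\Pi$ is a symmetric $d \times d$ matrix, so $(\tr(P_\Pi \var_\xi))^N$ is a polynomial of degree $N$ in the entries of the symmetric matrix $\var_\xi$. Since the coefficients depend continuously on $\Pi$ and $\G{d-k}$ is compact, integration in $\Pi$ produces
\[
I_{d,k}(\xi) = P(\var_\xi),
\]
where $P$ is a polynomial on the space of real symmetric $d \times d$ matrices, homogeneous of degree $N$.

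Next I would establish that $P$ is invariant under the conjugation action of $O(d)$, i.e.\ $P(R^T M R) = P(M)$ for every symmetric $M$ and every $R \in O(d)$. This follows from the change of variables $\Pi \mapsto R\Pi$ in the defining integral, which preserves $\muG$, combined with the identity $P_{R\Pi} = R P_\Pi R^T$ and cyclicity of the trace, since $\tr(P_{R\Pi} M) = \tr(P_\Pi R^T M R)$.

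The conclusion then rests on the first fundamental theorem of invariant theory for the $O(d)$-action by conjugation on symmetric matrices: every such invariant polynomial is a polynomial in the power traces $\tr M, \tr M^2, \ldots, \tr M^d$. This is immediate upon diagonalising $M$, noting that the restriction of any invariant polynomial to the diagonal is a symmetric polynomial in the eigenvalues, and applying Newton's identities. Writing $P$ as a linear combination of monomials $\prod_i (\tr M^{\alpha_i})^{\beta_i}$ with $\alpha_i, \beta_i$ nonnegative integers, and observing that $\tr M^{\alpha}$ is homogeneous of degree $\alpha$ in the entries of $M$, the homogeneity of $P$ forces $\sum_i \alpha_i \beta_i = N$ on every monomial that appears, which is exactly the claim.

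The only substantive step is the invariant-theoretic reduction, which is classical. What the statement does \emph{not} assert, and what would require genuinely heavier machinery (for instance, Weingarten calculus on the Stiefel manifold $\V{d-k}$), is an explicit formula for the coefficients of the resulting linear combination; the examples leading to \eqref{simplestweight} and \eqref{6dRadonweight} already indicate that such formulae become combinatorially cumbersome as $d(d-k)$ grows.
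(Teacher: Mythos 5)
Your proof is correct, and at heart it travels the same road as the paper's: both arguments reduce to the observation that $I_{d,k}$, viewed through $\var_\xi$, is a symmetric polynomial in the eigenvalues of $\var_\xi$, and then invoke the fact that symmetric polynomials are generated by the power sums $\sum_j\lambda_j^\beta=\tr\var_\xi^\beta$. The packaging differs. The paper diagonalises $\var_\xi^{1/2}$ up front, rewrites the $\G{d-k}$-average as a Stiefel integral of $\bigl(\sum_{i,j}\lambda_j^2 v_{i,j}^2\bigr)^{N}$ with $N=\frac{d(d-k)-2}{2}$, expands multinomially, interchanges sum and integral, and reads off the symmetric-polynomial structure from the permutation invariance of $\muV$. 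You bypass the expansion entirely: you note that $I_{d,k}(\xi)=P(\var_\xi)$ for a degree-$N$ homogeneous polynomial $P$ on symmetric matrices, derive $O(d)$-conjugation invariance from $P_{R\Pi}=RP_\Pi R^T$ together with invariance of $\muG$, and then cite the first fundamental theorem for $O(d)$ acting by conjugation on symmetric matrices (which, as you say, is itself proved by diagonalisation plus Newton's identities, so the underlying content is identical). In both arguments the parity hypothesis on $d$ and $k$ enters in exactly the same way, namely to make $N$ a nonnegative integer so that the integrand is a polynomial. Your version is a bit more economical and conceptually transparent; the paper's, by spelling out the coefficients $c_{\ell,\alpha}\int_{\V{d-k}}\prod_{i,j}v_{i,j}^{2\alpha_{i,j}}\,d\muV$, points toward how one would actually compute the linear combination in low-dimensional cases (as in \eqref{simplestweight} and \eqref{6dRadonweight}) — the explicit-formula limitation you correctly flag at the end.
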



\section{Preliminaries regarding the family of distributions $A_s$} \label{sec:prelim}
We will understand the analytic family of distributions $A_s$ defined in Section \ref{sec:HD} in terms of the more standard family of distributions
\[ \zeta_s=\frac{1}{\gamma_d(s)}|\det(X)|^{-d+s} \]
where as before $\gamma_d(s)$ is the normalised product of gamma functions \eqref{gammadef}.  Here $X\in\Mat(d,d)$ which we implicitly identify throughout with $\R^{d^2}$. In particular we take the inner product of $X,Y\in\Mat(d,d)$ to be the Frobenius inner product. 

For all $s\in\C$, it is known that $\zeta_s$ is a well-defined tempered distribution, understood in the sense of analytic continuation, and
\begin{equation}\label{Zhat}
\widehat{\zeta_s}=\frac{(2\pi)^{ds}}{\gamma_d(d-s)}|\det(X)|^{-s} =(2\pi)^{ds}\zeta_{d-s}
\end{equation}
(see Rubin \cite{Rubin06} and the references therein). 

Rubin (\cite{Rubin06}, Corollary 4.5) additionally proves the identity
\begin{equation}\label{RubinID}
\langle \zeta_k, f\rangle =\frac{1}{\gamma_{d-k}(d-k)} \int_{\V{k}} \int_{\Mat(k,d)} f(VY) \,dY d\muV,
\end{equation}
for the singular cases $1 \leq k \leq d$.

To relate this to $A_s$, we let $L$ be the linear map
\[
L(x_1,\ldots,x_{d+1})= (x_1 -x_{d+1}, \ldots, x_d - x_{d+1}, x_{d+1} ).
\]
Recalling the definition of $\rho$ in \eqref{e:rhodefn}, it follows that
\[
\rho(x_1, \ldots,x_{d+1})= \det\circ L(x_1,\ldots,x_{d+1}),
\]
and hence
\begin{equation}\label{Anew} A_s  = (\zeta_s\otimes1)\circ L.
\end{equation}

\begin{lemma}\label{lem:Ahat} For all $s\in\C$, $A_s$ is a well-defined tempered distribution and
\begin{equation}\label{Ahat}
\widehat{ A_s }  = \frac{(2\pi)^{ds}}{\gamma_d(d-s)} |\det(\xi_1,\ldots,\xi_d)|^{-s}\delta\left(\sum_{i=1}^{d+1}\xi_i\right) = (2\pi)^{ds}\zeta_{d-s}\cdot\delta\left(\sum_{i=1}^{d+1}\xi_i\right).
\end{equation}
In particular, $\widehat{A_k}$ is positive whenever $1 \leq k \leq d-1$ is an integer.
\end{lemma}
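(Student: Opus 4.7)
The plan is to reduce everything to known properties of $\zeta_s$ via the factorisation $A_s = (\zeta_s\otimes 1)\circ L$ recorded in (\ref{Anew}). For temperedness: $\zeta_s$ is a tempered distribution on $\Mat(d,d)\cong\R^{d^2}$ (see Rubin and the references cited in the paper), so the tensor product $\zeta_s\otimes 1$ is a tempered distribution on $\R^{d(d+1)}$. Since $L$ is linear and invertible with $|\det L|=1$ (its block matrix is upper triangular with identity blocks on the diagonal), the pullback $(\zeta_s\otimes 1)\circ L$ remains a tempered distribution, which gives the first assertion.

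For the Fourier transform formula I would apply the pullback identity $\widehat{T\circ L} = |\det L|^{-1}\,\widehat{T}\circ L^{-T}$, valid for invertible linear $L$ and any tempered distribution $T$. A direct calculation gives
\[
L^{-T}(\xi_1,\ldots,\xi_{d+1}) = \Bigl(\xi_1,\ldots,\xi_d,\, \sum_{i=1}^{d+1}\xi_i\Bigr).
\]
Since the Fourier transform factors over tensor products, $\widehat{\zeta_s\otimes 1} = \widehat{\zeta_s}\otimes\widehat{1}$; the first factor is handled by (\ref{Zhat}), while $\widehat{1}$ on $\R^d$ is a Dirac mass at the origin. Combining these yields
\[
\widehat{\zeta_s\otimes 1}(\eta_1,\ldots,\eta_{d+1}) = \frac{(2\pi)^{ds}}{\gamma_d(d-s)}\,|\det(\eta_1,\ldots,\eta_d)|^{-s}\,\delta(\eta_{d+1}),
\]
and precomposing with $L^{-T}$ turns $\eta_{d+1}$ into $\sum_i \xi_i$ while leaving the first $d$ blocks unchanged; this produces the claimed formula for $\widehat{A_s}$. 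The second equality in (\ref{Ahat}) is then simply a rewriting via the defining relation $\zeta_{d-s} = \gamma_d(d-s)^{-1}|\det|^{-s}$.

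For positivity of $\widehat{A_k}$ when $k\in\{1,\ldots,d-1\}$ is an integer, I would use the second form $\widehat{A_k} = (2\pi)^{dk}\zeta_{d-k}\cdot\delta(\sum_i\xi_i)$. Since $d-k\in\{1,\ldots,d-1\}$ as well, Rubin's identity (\ref{RubinID}), applied with $k$ replaced by $d-k$, represents $\zeta_{d-k}$ as an integral against a non-negative measure over $\V{d-k}\times\Mat(d-k,d)$, and therefore exhibits $\zeta_{d-k}$ as a positive distribution. In the affine coordinates $(\xi_1,\ldots,\xi_d,\,\sum_i\xi_i)$ on $\R^{d(d+1)}$, the product $\zeta_{d-k}\cdot\delta(\sum_i\xi_i)$ separates as a tensor product of two positive distributions, and is therefore positive. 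The only real obstacle is careful bookkeeping of the $(2\pi)$ and $\gamma$-factors produced by the various Fourier and normalisation conventions; the conceptual content of the lemma resides entirely in the factorisation (\ref{Anew}).
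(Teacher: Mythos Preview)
Your proposal is correct and follows essentially the same route as the paper: both use the factorisation $A_s=(\zeta_s\otimes 1)\circ L$, compute $L^{-T}$, invoke \eqref{Zhat} for $\widehat{\zeta_s}$, and appeal to Rubin's identity \eqref{RubinID} (with $k$ replaced by $d-k$) for positivity. The only point worth flagging is the constant: with the paper's convention $\widehat{f}(\xi)=\int f(x)e^{-ix\cdot\xi}\,dx$ one has $\widehat{1}=(2\pi)^d\delta$ on $\R^d$, so the factor coming out of the computation is $(2\pi)^{d(s+1)}$ rather than $(2\pi)^{ds}$ --- indeed the paper's own proof displays $(2\pi)^{d(s+1)}$, and this is the constant used downstream in Proposition~\ref{prop:weightchar}.
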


\begin{proof} Since $\det{L}=1$, it follows from \eqref{Anew} that
\[
\widehat{ A_s }  = \widehat{(\zeta_s\otimes1)}\circ L^{-T}=(\widehat{\zeta_s}\otimes\delta)\circ L^{-T}
\]
where
\[
L^{-T}(x_1,\ldots,x_{d+1})= \bigg(x_1 , \ldots, x_d ,\sum_{i=1}^{d+1}x_i \bigg).
\]
Therefore, by \eqref{Zhat},
\[
\widehat{ A_s }  = \frac{(2\pi)^{d(s+1)}}{\gamma_d(d-s)} |\det(\xi_1,\ldots,\xi_d)|^{-s}\delta\left(\sum_{i=1}^{d+1}\xi_i\right) = (2\pi)^{d(s+1)}\zeta_{d-s}\cdot\delta\left(\sum_{i=1}^{d+1}\xi_i\right).
\]
Positivity of $\widehat{A_k}$ for integers $1 \leq k \leq d-1$ can now be seen via Rubin's formula \eqref{RubinID}.
\end{proof}

We end this section by using the characterisation of $A_s$ in  \eqref{Anew} and Rubin's formula  \eqref{RubinID} to give a short proof of Drury's formula \eqref{DrurysID}.

\begin{proof}[Proof of \eqref{DrurysID}]  By \eqref{Anew} and \eqref{RubinID},
\begin{align*}
& \langle A_k, f_1\otimes\cdots\otimes f_{d+1} \rangle \\
& = \langle \zeta_k\otimes1,( f_1\otimes\cdots\otimes f_{d+1})\circ L^{-1}\rangle \\
& =  \frac{1}{\gamma_{d-k}(d-k)} \int_{\V{k}} \int_{\Mat(k,d)} \prod_{i=1}^df_i\bigg(\sum_{j=1}^k v_jy_{ji}+x_{d+1}\bigg)f_{d+1}\left(x_{d+1}\right) \,dx_{d+1}dYd\muV.
 \end{align*}
Here $v_j\in\R^d$ denotes the $j$-th column of $V$ and $y_{ji}$ is the $(j,i)$-th entry of $Y \in \Mat(k,d)$.  Writing $x_{d+1}$  in terms of orthonormal vectors $\{v_j\}_{j=1}^k$, there exist  $\{y_{j,d+1}\}_{j=1}^k$, and $x'$ satisfying $\langle x', v_j \rangle =0$ for each $1 \leq j \leq k$ such that 
\[
x_{d+1} = \sum_{j=1}^k v_jy_{j(d+1)}+x'.
\]
Let $\Pi$ denote the $k$-plane spanned by the $\{v_j\}_{j=1}^k$, thus $x\in\Pi^{\perp}$.
Making the further change variables $y_{ji}+y_{j,k+1}\mapsto y_{ji}$, yields
\[
\langle A_k, f_1\otimes\cdots\otimes f_{d+1} \rangle= \frac{1}{\gamma_{d-k}(d-k)} \int_{\V{k}} \int_{\Mat(k,d+1)}
\prod_{i=1}^{d+1}f_i\bigg(\sum_{j=1}^kv_jy_{ji}+x'\bigg) \,d_{\Pi^{\perp}}(x') dYd\muV.
\]
Observe that the families $\{v_j\}_{j=1}^k$ range over all orthonormal bases of all $k$-planes through origin and that $x'$, which depends on $\{v_j\}_{j=1}^k$, is a translation perpendicular to this $k$-plane. From this we may deduce \eqref{DrurysID}, noting that the factor of $(\gamma_k(k))^{-1}$ results from the change of variables from $\muV$ to $\muG$. 
\end{proof}


\section{Proof of Theorems \ref{thm:nontensor} and \ref{thm:extension}  } \label{sec:mainproof}

\subsection{Proof of Theorem \ref{thm:extension}}
By expanding the square, we have
\begin{equation*}
\langle A_k,|\widehat{gd\sigma}|^2\rangle =  \int_{\mathbb{S}^{d(d+1)-1}}\int_{\mathbb{S}^{d(d+1)-1}}g(\omega)\overline{g(\omega')}\widehat{A_k}(\omega-\omega') \, d\sigma(\omega)d\sigma(\omega')
\end{equation*}
and using the fact that $\widehat{A}_s(-\xi)=\widehat{A}_s(\xi)$ (which can be seen quickly from Lemma \ref{lem:Ahat}) we can write
\begin{align*}
\langle A_k,|\widehat{gd\sigma}|^2\rangle & = \int_{\mathbb{S}^{d(d+1)-1}}|g(\omega)|^2\int_{\mathbb{S}^{d(d+1)-1}}\widehat{A_k}(\omega-\omega') \,d\sigma(\omega')d\sigma(\omega) \\
&\qquad \qquad -\frac{1}{2}\int_{\mathbb{S}^{d(d+1)-1}}\int_{\mathbb{S}^{d(d+1)-1}}|g(\omega)-g(\omega')|^2\widehat{A_k}(\omega-\omega') \, d\sigma(\omega)d\sigma(\omega').
\end{align*}
Since $\widehat{A_k}$ is nonnegative (Lemma \ref{lem:Ahat}), we immediately obtain the sharp inequality
\[
\langle A_k,|\widehat{gd\sigma}|^2\rangle \leq \int_{\mathbb{S}^{d(d+1)-1}}|g(\omega)|^2\widehat{A_k} * d\sigma(\omega) \, d\sigma(\omega)
\]
with equality when $g$ is a constant function.

In order to complete the proof of Theorem \ref{thm:extension}, it remains to prove the following.
\begin{proposition}\label{prop:weightchar}
Suppose $d \geq 2$ and $1 \leq k \leq d-1$. Then
\begin{equation} \label{e:weightchar}
\widehat{A_k}*d\sigma (\omega) = \mathbf{C}_{d,k}\mathbf{D}_{d,k} \int_{\G{d-k}}\left( \tr (P_\Pi \var_\omega) \right)^\frac{d(d-k)-2}{2} d\mu_{\mathcal{G}}
\end{equation}
for all $\omega\in\mathbb{S}^{d(d+1)-1}$.
\end{proposition}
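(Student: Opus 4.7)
The plan is to unpack the convolution by substituting the explicit Fourier transform of $A_k$ from Lemma \ref{lem:Ahat} and then using Rubin's formula \eqref{RubinID} to represent $\zeta_{d-k}$ as an integral over $\V{d-k}\times\Mat(d-k,d)$. Writing $\omega=(\omega_1,\ldots,\omega_{d+1})$ on the unit sphere and introducing $X := (\omega_1-\omega_1',\ldots,\omega_d-\omega_d') \in \Mat(d,d)$, the delta factor $\delta(\sum_i\xi_i)$ present in $\widehat{A_k}$ forces $\omega_{d+1}' = \omega_{d+1} + \sum_{i=1}^d X_i$, and the sphere constraint $|\omega'|^2 = 1$ reduces to the quadratic equation $Q(X) - L(X) = 0$ where
\[
Q(X) := \tr(X^T X M),\quad L(X) := 2\tr(\Omega^T X),\quad M := I_d + \mathbf{1}\mathbf{1}^T,\quad \Omega := (\omega_i - \omega_{d+1})_{i=1}^d.
\]
Thus, up to a constant from Lemma \ref{lem:Ahat}, matters reduce to evaluating $\langle \zeta_{d-k}, \delta(Q-L)\rangle$.

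Applying Rubin's formula \eqref{RubinID} with $k$ replaced by $d-k$ rewrites this pairing as $\tfrac{1}{\gamma_k(k)}\int_{\V{d-k}}\int_{\Mat(d-k,d)}\delta(Q(VY)-L(VY))\,dY\,d\muV$. Since $V^TV=I_{d-k}$, one has $Q(VY) = \tr(YMY^T)$ and $L(VY) = 2\tr(\tilde\Omega^T Y)$ with $\tilde\Omega := V^T\Omega$. The key analytic step is to diagonalise the quadratic form: the substitution $Z := YM^{1/2}$, with Jacobian $(\det M)^{(d-k)/2} = (d+1)^{(d-k)/2}$, together with a completion of squares gives
\[
Q(VY) - L(VY) = \|Z-\tilde\Omega'\|^2 - \|\tilde\Omega'\|^2, \qquad \tilde\Omega' := \tilde\Omega\,M^{-1/2}.
\]
After translating, the $Z$-integral becomes $\int_{\R^{d(d-k)}}\delta(\|Z\|^2 - \|\tilde\Omega'\|^2)\,dZ = \tfrac12|\mathbb{S}^{d(d-k)-1}|\,\|\tilde\Omega'\|^{d(d-k)-2}$.

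Next I would identify $\|\tilde\Omega'\|^2$ with a multiple of $\tr(P_\Pi \var_\omega)$, where $\Pi := \mathrm{span}(V)$. Since $VV^T = P_\Pi$, we have $\|\tilde\Omega'\|^2 = \tr(P_\Pi \Omega M^{-1}\Omega^T)$. Using the Sherman--Morrison inverse $M^{-1} = I - \tfrac{1}{d+1}\mathbf{1}\mathbf{1}^T$ together with the identity $\Omega\mathbf{1} = (d+1)(\mu - \omega_{d+1})$, where $\mu := \tfrac{1}{d+1}\sum_{i=1}^{d+1}\omega_i$, a short calculation gives $\Omega M^{-1}\Omega^T = \sum_{i=1}^{d+1}(\omega_i - \mu)(\omega_i - \mu)^T = (d+1)\var_\omega$, whence $\|\tilde\Omega'\|^2 = (d+1)\tr(P_\Pi\var_\omega)$.

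The integrand now depends on $V$ only through $\Pi$, so the Stiefel integration collapses to a Grassmannian one via the quotient relation
\[
\int_{\V{d-k}} f(\mathrm{span}(V))\,d\muV = \tfrac{1}{\gamma_{d-k}(d-k)}\int_{\G{d-k}} f(\Pi)\,d\muG(\Pi),
\]
which is a consequence of the normalisations $\int_{\V{d-k}}d\muV = \gamma_{d-k}(d)^{-1}$ and $\int_{\G{d-k}}d\muG = \gamma_{d-k}(d-k)\gamma_{d-k}(d)^{-1}$. Assembling the prefactor from Lemma \ref{lem:Ahat}, the Jacobian $(d+1)^{-(d-k)/2}$, the sphere constant $\tfrac12|\mathbb{S}^{d(d-k)-1}|$, the factor $(d+1)^{(d(d-k)-2)/2}$ arising from $\|\tilde\Omega'\|^{d(d-k)-2}$, and the constants $\gamma_k(k)^{-1}\gamma_{d-k}(d-k)^{-1} = \mathbf{D}_{d,k}$ should then produce the claimed identity. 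The main obstacle is the legitimate application of Rubin's formula to the singular ``test function'' $\delta(Q-L)$; this is best justified by an analytic-continuation argument (writing $\delta$ as a Gaussian limit and using regularity of both sides in the parameter), after which careful bookkeeping of the powers of $(d+1)$ and the sphere volumes delivers the precise constant $\mathbf{C}_{d,k}\mathbf{D}_{d,k}$.
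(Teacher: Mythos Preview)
Your argument is correct and uses exactly the same ingredients as the paper: the formula for $\widehat{A_k}$ from Lemma~\ref{lem:Ahat}, Rubin's identity \eqref{RubinID}, a linear change of variables diagonalising the quadratic form governed by $I+\mathbf{1}\mathbf{1}^T$, the identification $\Omega M^{-1}\Omega^T=(d+1)\var_\omega$, and the passage from the Stiefel to the Grassmann manifold. The one structural difference is the \emph{order} in which you deploy them. The paper first diagonalises in the full $d\times d$ matrix variable (via its $M=I+(\sqrt{d+1}-1)\mathbf{1}\mathbf{1}^T$, which is the square root of your $M$), passes to polar coordinates in $\R^{d^2}$ to evaluate the radial delta, \emph{then} applies Rubin's formula, and finally uses a second polar-coordinates/spherical-moment identity before reaching the Stiefel integral. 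You instead apply Rubin's formula at the outset, which immediately drops the integration to $\Mat(d-k,d)$ where the orthonormality $V^TV=I_{d-k}$ makes the quadratic form diagonalise in one step; this bypasses both polar-coordinate computations. Your direct Sherman--Morrison calculation of $\Omega M^{-1}\Omega^T=(d+1)\var_\omega$ also replaces the paper's separate Lemma~\ref{lem:covar}, which obtains the same identity via a block-matrix argument. The justification issue you flag---applying \eqref{RubinID} to a $\delta$-supported test function---is equally present in the paper's version and is handled, as you suggest, by the analytic-continuation definition of $\zeta_{d-k}$.
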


\begin{proof}
Firstly, we observe that $\widehat{A_k}*d\sigma$ is a well-defined distribution  as $d\sigma$ is compactly supported and $\widehat{A_k}$ is well-defined by Lemma \ref{lem:Ahat}. In order to obtain the claimed identity in Proposition \ref{prop:weightchar}, we use Lemma \ref{lem:Ahat} to write
\[
\widehat{A_k}*d\sigma (\omega) = \frac{2(2\pi)^{d(k+1)}}{\gamma_d(d-k)}\int_{\R^{d(d+1)}} \delta(1-|\omega-\eta|^2)|\det(\eta_1,\ldots,\eta_d)|^{-k}\delta\bigg(\sum_{i=1}^{d+1}\eta_i\bigg) \, d\eta,
\]
and hence
\begin{align*}
\widehat{A_k}*d\sigma (\omega) & = \frac{(2\pi)^{d(k+1)}}{\gamma_d(d-k)}\int_{(\R^d)^d} \delta\bigg(\sum_{i=1}^d |\eta_i|^2+\sum_{i<j}\langle\eta_i,\eta_j\rangle  - \sum_{i=1}^{d}\langle \omega_i- \omega_{d+1},\eta_i \rangle\bigg)|\det N|^{-k}\,d\eta_1\cdots d\eta_d,
\end{align*}
where $N \in \Mat(d,d)$ has $i$-th column equal to $\eta_i$.

Next we claim that if
\[
M := I + (\sqrt{d+1}-1){\bf 11}^T
\]
with ${\bf 1}$ denoting the unit vector in the direction $(1,1,\ldots,1)$, then
\begin{equation}\label{COV}
	 \frac{1}{2} \sum_{i=1}^d|\xi_i|^2 = \sum_{i=1}^d |\eta_i|^2 +\sum_{i<j}\langle\eta_i,\eta_j\rangle
\end{equation}
holds, where $X = NM$ and $X \in \Mat(d,d)$ denotes the matrix whose $i$-th column is $\xi_i$. To see this, observe that $M^2 = I +d{\bf 11}^T$, the matrix with each diagonal element equal to 2 and every other element equal to 1.
%
We have $\xi_i=\sum_{j=1}^d M_{ij} \eta_j$ and thus, by construction,
\begin{align*}
\sum_{i=1}^d |\xi_i|^2 = \sum_{j=1}^d \bigg( \sum_{i=1}^d M_{ij}^2\bigg) |\eta_j|^2 + 2\sum_{j < k}^d \bigg(\sum_{i=1}^d M_{ij}M_{ik}\bigg) \langle\eta_j,\eta_k\rangle
= \sum_{j=1}^d 2|\eta_j|^2 + 2\sum_{j<k}^d \langle\eta_j,\eta_k\rangle.
\end{align*}

Computing the eigenvalues of $M$ shows that $\det M = \sqrt{d+1}$, and therefore the change of variables $X = NM$ yields
\begin{align*}
\widehat{A_k}*d\sigma (\omega) = \frac{ (d+1)^\frac{k-1}{2}(2\pi)^{d(k+1)} }{\gamma_d(d-k)}\int_{(\R^d)^d}\delta(\tfrac{1}{2}|X|^2 - \langle \Omega' M^{-1},X\rangle) |\det(X)|^{-k} \,dX.
\end{align*}
Here, $\Omega' \in \Mat(d,d)$ has $i$-th column equal to $\omega_i- \omega_{d+1}$, and the inner product of two matrices is the Frobenius inner product, as we freely identify $\Mat(d,d)$ with $(\R^d)^d$ here and in what follows. 

In what follows we set $\Omega_M = \Omega' M^{-1}$. Switching to polar coordinates in $\R^{d^2}$,
\begin{align*}
\widehat{A_k}*d\sigma (\omega) & = \frac{2(d+1)^{\frac{k-1}{2}}(2\pi)^{d(k+1)}}{\gamma_d(d-k)}  \int_{\mathbb{S}^{d^2-1}}\int_{0}^\infty \delta((r - \langle \Theta,\Omega_M\rangle)^2 - \langle \Theta,\Omega_M \rangle^2) \frac{r^{d^2-kd-1}}{|\det \Theta|^{k}} \,dr  d\sigma (\Theta) \\
& = \frac{(d+1)^{\frac{k-1}{2}} 2^{d(d-k)-2}(2\pi)^{d(k+1)} }{\gamma_d(d-k)}\int_{\mathbb{S}^{d^2-1}} |\langle \Theta, \Omega_M\rangle|^{d(d-k)-2} \, |\det \Theta|^{-k} \, d\sigma(\Theta).
\end{align*}
Next we apply \eqref{RubinID} which yields
\[
\widehat{A_k}*d\sigma (\omega) = C_{d,k}  \int_{\V{d-k}} \int_{\Mat(d-k,d)}|\langle VY,\Omega_M\rangle|^{d(d-k)-2}\delta(|Y|^2-1) \,dYd\muV,
\]
where
\[
C_{d,k} = (d+1)^{\frac{k-1}{2}} 2^{d(d-k)-1} (2\pi)^{d(k+1)} \left(\frac{1}{\gamma_{k}(k)} \right)^2.
\]
Here we used that $|VY|^2 = |Y|^2$ for all $V \in \V{d-k}$. By a further polar coordinates change of variables and using the explicit integral
\[
\int_{\mathbb{S}^{n-1}} |\langle \Theta, V \rangle|^p \,d\sigma(\Theta) = 2\pi^{\frac{n-1}{2}} \frac{\Gamma(\frac{p+1}{2})}{\Gamma(\frac{p+n}{2})} |V|^p
\]
valid for $n \geq 2$ and $p > -n$, we obtain
\[
\widehat{A_k}*d\sigma (\omega) = (d+1)^{\frac{k-1}{2}}  (2\pi)^{d(k+1)} \frac{|\mathbb{S}^{d(d-k)-1}|}{\gamma_{k}(k)} \int_{\V{d-k}} |V^T\Omega_M|^{d(d-k)-2}\ \, d\muV.
\]
Now,
\[
|V^T\Omega_M|^2 = \tr(V^T\Omega_M\Omega_M^TV) = \tr (VV^T\Omega_M\Omega_M^T)
\]
and we observe that the matrix $VV^T$ is the orthogonal projection onto the $(d-k)$-plane, $\Pi$, spanned by the columns of $V$.   In particular, $VV^T$ depends only on this $(d-k)$-plane. By defining $P_\Pi = VV^T$ we have
\begin{equation}\label{normform}
\int_{\V{d-k}}|V^T\Omega_M|^{d(d-k)-2} \,d \muV = \int_{\V{d-k}} ( \tr (P_\Pi \Omega_M\Omega_M^T))^\frac{d(d-k)-2}{2} \,
d\muV.
\end{equation}
As the integrand depends only $\Pi$,  we have
\[
\int_{\V{d-k}}|V^T\Omega_M|^{d(d-k)-2}\,d\muV=\frac{1}{\gamma_{d-k}(d-k)}\int_{\G{d-k}} (\tr (P_\Pi \Omega_M\Omega_M^T))^\frac{d(d-k)-2}{2} \,
d\mu_\mathcal{G}.
\]
Finally, using the forthcoming Lemma \ref{lem:covar} which says that $\Omega_M\Omega_M^T=(d+1) \var_\omega $, we obtain \eqref{e:weightchar}.
\end{proof}

\subsection{Proof of Theorem \ref{thm:nontensor}} We use an argument of Vega (see, for example, \cite{VegaPAMS1988}) to show that Theorem \ref{thm:extension} implies Theorem \ref{thm:nontensor}. Despite this argument being well-known, since the statements of these theorems contain explicit constants, we provide some details. First, by simple changes of variables, we may write
\[
U(x,t) = \frac{1}{2(2\pi)^{d(d+1)}} \int_0^\infty e^{-itr} \widehat{g_rd\sigma}(\sqrt{r}x) r^{\frac{d(d+1)-2}{2}} \, dr
\]
where $g_r(\omega) := \widehat{F}(\sqrt{r}\omega)$. We now apply Plancherel's theorem in the $t$ variable, along with the fact that $A_k$ is homogeneous of degree $d(k-d)$, to obtain
\[
\int_{\mathbb{R}} \langle A_k,|U(\cdot,t)|^2\rangle \,dt = \frac{\pi}{(2\pi)^{2d(d+1)}} \int_0^\infty \langle A_k,|\widehat{g_{r^2}d\sigma}|^2\rangle r^{d(d-k)-2} \, r^{d(d+1)-1} \, dr.
\]
Applying Theorem \ref{thm:extension}, along with the fact that $I_{d,k}$ is homogeneous of degree $d(d-k)-2$ and reversing the polar coordinates change of variables, we deduce the claimed inequality in Theorem \ref{thm:nontensor}. Also, it is clear that if $F$ is radial then each $g_r$ is a constant function. Therefore, the fact that constant functions give equality in Theorem \ref{thm:extension} implies that radial functions give equality in Theorem \ref{thm:nontensor}. \qed

We end this section by proving Propositions \ref{prop:carneirocomp} and \ref{prop:evenmore} concerning the structure of the weight $I_{d,k}$.
\begin{proof}[Proof of Proposition \ref{prop:carneirocomp}]
First, we observe that when $d=2$ we only consider $k=1$ in which case $I_{2,1}$ is identically equal to one, and therefore there is nothing to prove. For $d \geq 3$, we note that
$\var_\xi$ is positive semidefinite and therefore $\var_\xi^{1/2}$ is well-defined for every $\xi\in\R^{d^2}$. Using this and \eqref{normform} (along with homogeneity) yields,
\[
I_{d,k}(\xi) = c_{d,k}\int_{\V{d-k}} |V^T\var_\xi^{1/2}|^{d(d-k)-2} \,d\mu_\mathcal{V}
\]
for some constant $c_{d,k}$. The map
\[
M \mapsto \bigg(\int_{\V{d-k}} |V^TM|^{d(d-k)-2} \, d\mu_\mathcal{V}\bigg)^{\frac{1}{d(d-k)-2}}
\]
is a norm on $\R^{d^2}$ since $d(d-k)-2 \geq 1$, and since all norms on finite dimensional vector spaces are equivalent, we immediately obtain the desired conclusion in Proposition \ref{prop:carneirocomp} by comparison with the Hilbert--Schmidt norm.
\end{proof}

\begin{proof}[Proof of Proposition \ref{prop:evenmore}]
Let $\{\lambda_1,\ldots, \lambda_d\}$ denote the eigenvalues of $\var_\xi^{1/2}$. Then
\[
I_{d,k}(\xi)= c_{d,k}\int_{\V{d-k}}\bigg(\sum_{i=1}^{d-k}\sum_{j=1}^d \lambda_j^2v_{i,j}^2\bigg)^\frac{d(d-k)-2}{2}  \,d\muV
\]
for some constant $c_{d,k}$. Since either $d$ is even or $k$ is odd it follows that $d(d-k)$ is even and therefore
\[
\bigg(\sum_{i=1}^{d-k}\sum_{j=1}^d \lambda_j^2v_{i,j}^2\bigg)^\frac{d(d-k)-2}{2} = \sum_{2(\alpha_{1,1}+\cdots+\alpha_{d-k,d})=\ell} c_{\ell,\alpha} \prod_{i=1}^{d-k}\prod_{j=1}^{d}(\lambda_jv_{i,j})^{2\alpha_{i,j}}
\]
where $\ell = d(d-k)-2$. Therefore,
\[
I_{d,k}(\xi)= c_{d,k} \sum_{2(\alpha_{1,1}+\cdots+\alpha_{d-k,d})=\ell}\bigg(c_{\ell,\alpha}\int_{\V{d-k}}\prod_{i=1}^{d-k}\prod_{j=1}^{d}(v_{i,j})^{2\alpha_{i,j}} \,d\muV\bigg)\prod_{j=1}^{d}\lambda_j^{2\sum_{i=1}^{d-k}\alpha_{i,j}}.
\]
The advantages of this expression is that it is a symmetric polynomial in the $\lambda_j^2$ (that is, it is invariant under reordering of the eigenvalues). Since symmetric polynomials are generated by the power-sum polynomials, then
\[
\sum_{j=1}^d \lambda_j^{2\beta}=\tr((\var_\xi)^\beta)
\]
and we deduce that $I_{d,k}(\xi)$ is a linear combination of terms of the form
\[
(\tr \var_\xi^{\alpha_1})^{\beta_1} (\tr \var_\xi^{\alpha_2})^{\beta_2}\cdots(\tr \var_\xi^{\alpha_\kappa})^{\beta_\kappa}
\]
where $\alpha_1\beta_1 +\alpha_2\beta_2+\cdots+ \alpha_\kappa\beta_\kappa = \ell/2 $ and each of the $\alpha_i$ and $\beta_i$ are nonnegative integers.
\end{proof}

\section{Proof of Theorem \ref{thm:main}}\label{sec:unique}

The sharp inequality in Theorem \ref{thm:main} is a consequence of Theorem \ref{thm:nontensor}; however, in order to obtain the claimed characterisation of cases of equality, it is necessary to enter into the argument which gives this implication.

\begin{proof}[Proof of Theorem \ref{thm:main}]

Arguing as in the proof of Theorem \ref{thm:extension}, we have
\begin{align*}
& \frac{(2\pi)^{2d(d+1)}}{\pi} \int_{\mathbb{R}} \int_{\mathcal{M}_{d,k}}  T_{d,k}(|U(\cdot,t)|^2)(\Pi)\, d\mu_{\mathcal{M}}(\Pi)dt \\
& = \mathbf{C}_{d,k} \int_{(\mathbb{R}^d)^{d+1}} |\widehat{F}(\xi)|^2 I_{d,k}(\xi) \, d\xi  - \frac{1}{\mathbf{D}_{d,k}} \int_{(\mathbb{R}^d)^{d+1}}  |\widehat{F}(\xi) - \widehat{F}(\eta)|^2 \, d\Sigma_\xi(\eta)d\xi,
\end{align*}
where $F = f_1 \otimes \cdots \otimes f_{d+1}$ and
\[
d\Sigma_\xi(\eta) = \widehat{A_k}(\xi-\eta)\delta(|\xi|^2 - |\eta|^2) \, d\eta.
\]

This yields the claimed estimate with optimal constant, and equality holds if and only if $\widehat{F}(\xi) = \widehat{F}(\eta)$ for almost every $\xi$ and $\eta$ in the support of the measure $d\Sigma_\xi(\eta)d\xi$. By Lemma \ref{lem:Ahat} the support of this measure, which we denote by $\supp$, is characterised by those $\xi$ and $\eta$ for which
\begin{equation*}
|\xi| = |\eta|, \qquad \sum_{i=1}^{d+1} \xi_i = \sum_{i=1}^{d+1} \eta_i,
\end{equation*}
and the points $\xi_i - \eta_i$, for $1 \leq i \leq d+1$, are co-$k$-planar. Recalling that $F = f_1 \otimes \cdots \otimes f_{d+1}$, the characterisation of extremisers follows once we show that
\[
\prod_{i=1}^{d+1} \widehat{f_i}(\xi_i) = \prod_{i=1}^{d+1} \widehat{f_i}(\eta_i)
\]
for almost every $(\xi,\eta) \in \supp$ implies that $\widehat{f_i}(\xi) = e^{a|\xi|^2 + b \cdot \xi + c_i}$, where $\mathrm{Re}(a) < 0$, $b \in \mathbb{C}^d$ and $c_1,c_2 \in \mathbb{C}$. Indeed, it is clear that this class is invariant under the Fourier transform, and such $f_i$ satisfy the above functional equation.

First, we argue that the $f_i$ are mutually parallel. For this, for each $i$, fix $\xi_i^*$ such that $\widehat{f_i}(\xi_i^*) \neq 0$ and note that, clearly, $(\xi,\eta) \in \supp$ if
\[
\xi = (\xi_1,\xi_2^*,\xi_3^*,\ldots,\xi_{d+1}^*), \qquad \eta = (\xi_2^*,\xi_1,\xi_3^*,\ldots,\xi_{d+1}^*).
\]
The functional equation implies that $\widehat{f_1}(\xi_1)\widehat{f_2}(\xi_2^*) = \widehat{f_1}(\xi_2^*)\widehat{f_2}(\xi_1)$ for almost every $\xi_1 \in \mathbb{R}^d$, and hence $\widehat{f_1} = \lambda \widehat{f_2}$ for some $\lambda \neq 0$ (since each $f_j$ is assumed to be a nonzero function). In this way, we may argue that all $f_i$ are mutually parallel, and thus without loss of generality, we take $f_i = f$ for each $1 \leq i \leq d+1$.

Next, we argue that $\widehat{f}$ must satisfy the well-studied Maxwell--Boltzmann functional equation; that is,
\begin{equation} \label{e:MB}
\widehat{f}(\xi_1)\widehat{f}(\xi_2) = \widehat{f}(\eta_1)\widehat{f}(\eta_2)
\end{equation}
for almost every $\xi_1,\xi_2,\eta_1,\eta_2 \in \mathbb{R}^d$ such that $|\xi_1|^2 + |\xi_2|^2 = |\eta_1|^2 + |\eta_2|^2$ and $\xi_1 + \xi_2 = \eta_1 + \eta_2$. To see why this is the case, fix such $\xi_1,\xi_2,\eta_1,\eta_2$ and set
\[
\xi = (\xi_1,\xi_2,\xi_3^*,\ldots,\xi_{d+1}^*), \qquad \eta = (\eta_1,\eta_2,\xi_3^*,\ldots,\xi_{d+1}^*),
\]
where the $\xi_i^*$ are as above. Again it is easy to verify that $(\xi,\eta) \in \supp$, and therefore we immediately obtain \eqref{e:MB}.

At this point we may now appeal to the existing literature on the Maxwell--Boltzmann functional equation to deduce that $\widehat{f}(\xi) = e^{a|\xi|^2 + b \cdot \xi + c}$, where $\mathrm{Re}(a) < 0$, $b \in \mathbb{C}^d$ and $c \in \mathbb{C}$. For $\widehat{f} \in L^1(\mathbb{R}^d)$, this goes back at least to \cite{Lions} and \cite{Perthame} (see also \cite{Villani}); however the same conclusion holds for $\widehat{f}$ which are merely locally integrable (for details of this more general statement, see \cite{Foschi} for $d=2$, and \cite{BBJP} for arbitrary $d \geq 2$). Thus, it remains to justify that $\widehat{f}$ is locally integrable whenever the right-hand side of \eqref{fulltensor} is finite. However, this is clear from Proposition \ref{prop:carneirocomp}, since
\begin{align*}
\int_{(\mathbb{R}^d)^{d+1}} \prod_{i=1}^{d+1} |\widehat{f}(\xi_i)|^2I_{d,k}(\xi) \, d\xi & \geq c_{d,k} \sum_{m \neq n}  \int_{(\mathbb{R}^d)^{d+1}} \prod_{i=1}^{d+1} |\widehat{f}(\xi_i)|^2 |\xi_m - \xi_n|^{d^2 - kd - 2} \, d\xi \\
& = c_{d,k} \|\widehat{f}\|_2^{2(d-1)}  \int_{(\mathbb{R}^d)^{2}}  |\widehat{f}(\xi_1)|^2|\widehat{f}(\xi_2)|^2 |\xi_1 - \xi_2|^{d^2 - kd - 2} \, d\xi_1d\xi_2
\end{align*}
implies $\widehat{f} \in L^2(\mathbb{R}^d)$.
\end{proof}

\section{Appendix on covariance}  \label{sec:covar}
Here we establish the following fact which played a role in the proof of Proposition \ref{prop:weightchar}.
\begin{lemma}\label{lem:covar} Let $\var_\omega$ denote the covariance matrix of a vector-valued random variable taking the values $\{\omega_i\}_{i=1}^{d+1}$ with equal probability, and let
\[
M = I + (\sqrt{d+1}-1){\bf 11}^T.
\]
If $\Omega' \in \Mat(d,d)$ has $i$th column equal to $\omega_{i}-\omega_{d+1}$, then $\Omega_M\Omega_M^T=(d+1)\var_\omega$, where $\Omega_M = \Omega' M^{-1}$.
\end{lemma}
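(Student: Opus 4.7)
The plan is to reduce both sides of the claimed identity $\Omega_M\Omega_M^T=(d+1)\var_\omega$ to the same bilinear expression in the columns of $\Omega'$.

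First I would exploit the translation invariance of the covariance: since $\var_\omega$ depends only on the differences $\omega_i-\omega_j$, and since $\Omega'$ itself is built entirely from the differences $\omega_i-\omega_{d+1}$, both sides are invariant under the simultaneous translation $\omega_i \mapsto \omega_i - \omega_{d+1}$. Hence we may assume $\omega_{d+1}=0$, in which case $\Omega'=[\omega_1 \mid \cdots \mid \omega_d]$.

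Next I would compute $M^{-2}$. Since $M$ is symmetric, $\Omega_M\Omega_M^T = \Omega' M^{-1}(M^{-1})^T\Omega'^T = \Omega' M^{-2}\Omega'^T$, so only $M^{-2}$ is needed. The identity $M^2 = I + d\,\mathbf{11}^T$ was already recorded in the proof of Proposition \ref{prop:weightchar}; applying the Sherman--Morrison formula (or recognising that $\mathbf{11}^T$ is a rank-one projection and diagonalising accordingly) gives
\[
M^{-2} = I - \tfrac{d}{d+1}\mathbf{11}^T = I - \tfrac{1}{d+1}J,
\]
where $J$ is the $d\times d$ all-ones matrix, since $\mathbf{1}=(1,\ldots,1)/\sqrt{d}$. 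Thus $(M^{-2})_{i,l} = \delta_{i,l} - \tfrac{1}{d+1}$.

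Then I would compute each side explicitly and compare. On the one hand,
\[
\Omega' M^{-2}\Omega'^T = \sum_{i,l=1}^{d} \Bigl(\delta_{i,l} - \tfrac{1}{d+1}\Bigr)\omega_i\omega_l^T.
\]
On the other hand, using the standard identity $\sum_{i=1}^{d+1}(\omega_i-\bar\omega)(\omega_i-\bar\omega)^T = \sum_{i=1}^{d+1}\omega_i\omega_i^T - (d+1)\bar\omega\bar\omega^T$ with $\omega_{d+1}=0$ and $\bar\omega = \tfrac{1}{d+1}\sum_{i=1}^d\omega_i$, I get
\[
(d+1)\var_\omega = \sum_{i=1}^{d}\omega_i\omega_i^T - \tfrac{1}{d+1}\sum_{i,l=1}^{d}\omega_i\omega_l^T = \sum_{i,l=1}^{d}\Bigl(\delta_{i,l} - \tfrac{1}{d+1}\Bigr)\omega_i\omega_l^T,
\]
which matches. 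There is no serious obstacle here; the only point that requires a brief justification is the computation of $M^{-2}$, and this is immediate from the already-noted formula for $M^2$.
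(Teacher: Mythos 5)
Your proof is correct. The key computational input in both your argument and the paper's is essentially the same—namely, $M^2 = I + d\,\mathbf{11}^T$, hence $M^{-2} = I - \frac{d}{d+1}\mathbf{11}^T = I - \frac{1}{d+1}J$—but the bookkeeping is organized differently. The paper never normalises $\omega_{d+1}$ to zero; instead it keeps the full tuple $(\omega_1,\ldots,\omega_{d+1})$, writes $\Omega' = \Omega P_{d+1}$ with $P_{d+1} \in \Mat(d+1,d)$, and verifies the $(d+1)\times(d+1)$ matrix identity $P_{d+1}M^{-2}P_{d+1}^T = I_{d+1} - \mathbf{1}_{d+1}\mathbf{1}_{d+1}^T$, which it then matches with the probabilistic formula $\var_\omega = \mathbf{E}(XX^T) - \mathbf{E}(X)\mathbf{E}(X)^T = \frac{1}{d+1}\Omega(I-\mathbf{11}^T)\Omega^T$. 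Your route exploits translation invariance up front to set $\omega_{d+1}=0$, which collapses everything to a $d\times d$ computation and eliminates the auxiliary matrix $P_{d+1}$ entirely; the cost is a small loss of manifest symmetry in the $\omega_i$, but the gain is a shorter, more elementary verification. Both are entirely valid, and the translation-invariance reduction is a clean simplification worth noting.
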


\begin{proof} Let $I\in\Mat(d,d)$ denote the identity matrix and define
 \[
 P_{d+1} = \left( \begin{array}{rcr}  & &  \\ & I &   \\
& & \\
-1 &\cdots & -1  \end{array} \right).
\]
It then follows that
\[
\Omega' = \Omega P_{d+1}
\]
where the $i$th column of $\Omega \in \Mat(d+1,d+1)$ is $\omega_i$, and therefore
\[
\Omega_M\Omega_M^T = \Omega P_{d+1} M^{-1} M^{-T} P_{d+1}^T \Omega^T.
\]
Since
\[
M^{-1} = I +\bigg(\frac{1}{\sqrt{d+1}}-1\bigg){\bf 11}^T
\]
it follows that
\[
P_{d+1}M^{-1}M^{-T} P_{d+1}^T =  I  - {\bf 11}^T
\]
and thus it remains to show $\frac{1}{d+1}\Omega( I - {\bf 11}^T)\Omega^T=\var_\omega$.

To see this, view $(\Omega, \mu)$ as a probability space, where  $\Omega$ is now the set of vectors that were the columns of the matrix $\Omega$, i.e. $\Omega=\{ \omega_i\}_{i=1}^{d+1}$, and let $\mu$ be the uniform probability measure. Let $X$ be a random variable taking values from $(\Omega, \mu)$ and let $\mathbf{E}(X)$ denote the expected value of $X$. As $X$ is a vector-valued random variable, we express its variance in a matrix. The $i$th diagonal element being the variance of the $i$th coordinate and the $(i,j)$th element being the covariance between the $i$th and $j$th coordinates. Specifically,
\[
\var_\omega = \mathbf{E}((X- \mathbf{E}(X))(X- \mathbf{E}(X))^T) = \mathbf{E}(XX^T) - \mathbf{E}(X)\mathbf{E}(X)^T .
\]
Now,
\[
\mathbf{E}(X) = \frac{1}{d+1}\sum_{i=1}^{d+1} \omega_i = \frac{1}{\sqrt{d+1}} \Omega {\bf 1}
\]
and consequently $\mathbf{E}(X)\mathbf{E}(X)^T = \frac{1}{d+1} \Omega{\bf 1 1}^T \Omega^T$. Direct computations for each component show that $\mathbf{E}(XX^T)=\frac{1}{d+1}\Omega\Omega^T$ and therefore
 \[
\var_\omega = \frac{1}{d+1}\Omega( I - {\bf 11}^T)\Omega^T,
 \]
as claimed.
\end{proof}

\end{document}